\newtheorem{thm}{Theorem}[section]
\newtheorem*{thm*}{Theorem}
\newtheorem{lemma}[thm]{Lemma}
\newcommand{\beq}{\begin{equation}}
\newcommand{\eeq}{\end{equation}}
\newtheorem{remark}{Remark}
\newcommand{\N}{\mathbb{N}}
\def\O{\operatorname{O}}
\def\o{\operatorname{o}}
\newcommand{\kommentar}[1]{}
\newtheorem{theorem}{Theorem}%[section]
\newtheorem*{remark*}{Remark}
\definecolor{pink}{rgb}{1,.2,.6}
\definecolor{orange}{rgb}{0.7,0.3,0}
\definecolor{blue}{rgb}{.2,.6,.75}
\definecolor{green}{rgb}{.4,.7,.4}
\definecolor{purple}{RGB}{127,0,255}
\begin{document}
\numberwithin{equation}{section}

\title{A lower bound for the discrepancy in a  Sato-Tate type measure}

\author[Das]{Jishu Das}
\address{Indian Institute of Science Education and Research Pune, Dr. Homi Bhabha Road, Pune 411008}
\email{jishu.das@students.iiserpune.ac.in}

\keywords{Discrepancy, Petersson trace formula, Kloosterman sums, Sato-Tate measure}
\subjclass[2020]{Primary: 11F25, 11F72, Secondary: 11L05}
\thanks{}

\date{\today}

\begin{abstract} 
Let $S_k(N)$ denote the space of cusp forms of even integer weight $k$ and level $N$. We prove an asymptotic for the
Petersson trace formula for $S_k(N)$ under an appropriate condition.
Using the non-vanishing of a Kloosterman sum involved in the asymptotic, we give a lower bound for discrepancy in the Sato-Tate distribution for levels not divisible by $8$. 
This generalizes a result of Jung and Sardari  \cite[Theorem 1.6]{JS} for squarefree levels. 
An analogue of the Sato-Tate distribution was obtained by Omar and Mazhouda \cite[Theorem 3]{OM} for the distribution of eigenvalues $\lambda_{p^2}(f)$ where $f$ is a Hecke eigenform and  $p$ is a prime number. 
%Using the Petersson trace formula, with $\gcd(p, N) = 1$ and  $k+N\rightarrow \infty$, it can be shown
%that a measure $\nu_{k,N,2}$ converges weakly to $\mu_{\infty,2},$ an analogue of the Sato-Tate measure that corresponds to the distribution of eigenvalues  $\lambda_{p^2}(f)$ for an eigenform $f$.
As an application of the above-mentioned asymptotic, we obtain a sequence of weights $k_n$ such that  discrepancy in the analogue distribution obtained in \cite{OM}
 has a lower bound.
\end{abstract}
\maketitle 

\section{Introduction}
Let $S_k(N)$ denote the space of cusp forms of even integer weight $k$ and level $N$. For $(n,N)=1,$ the $n^{\text{th}}$ normalised Hecke operator acting on $S_k(N)$ is given by $$T_n(f)(z)=n^{\frac{k-1}{2}}\sum_{ad=n,d>0}\frac{1}{d^k}\sum_{b \,(\text{mod} \, d) }f\Big(\frac{az+b}{d}\Big).$$ Let $\mathcal{F}_k(N)$ be an orthonormal basis of $S_k(N)$ consisting only of joint eigenfunctions of the Hecke operators $T_n$. The Fourier expansion of $f$ at the cusp $\infty$ is given by $$f(z)=\sum_{n=1}^{\infty} a_n(f)n^{\frac{k-1}{2}}  e^{2\pi i nz}$$ for $f\in S_k(N).$  We denote $\lambda_n(f)$ to be the $n^{\text{th}}$ normalised Hecke eigenvalue of $f$, i.e. $T_n(f)=\lambda_n(f) f$. The Fourier coefficient $a_n(f)$ and $\lambda_n(f) $ are related by the condition  $a_n(f)=a_1(f)\lambda_n(f).$ From the Ramanujan-Deligne bound\cite{DP},  we have $$|\lambda_n(f)|\leq \tau(n),$$ where $\tau(n) $ denotes the divisor function. In particular $|\lambda_p(f)|\leq 2$ for a prime number $p.$

A sequence of real numbers $x_n \in [a,b]$ is equidistributed with respect to the probability measure $\mu$ if for any $[a',b']\subset [a,b],$ 
$$
\lim_{n\rightarrow\infty} \frac{|\{m\leq n\,:\, x_m\in[a',b'] \} |}{n}=\int_{a'}^{b'} d\mu.
$$  
Let $$\mu_p(x):=\frac{p+1}{\pi}\frac{\big(1-\frac{x^2}{4}\big)^{\frac{1}{2}}}{(\sqrt{p}+\sqrt{p^{-1}})^2-x^2} $$ and \begin{equation}\label{mukndefn}
\mu_{k,N}:= \frac{1}{\text{dim}(S_k(N))}\sum_{f\in \mathcal{F}_k(N)}\delta_{\lambda_p(f)} ,
\end{equation}
where $\delta_x$ is the Dirac measure at $x$ and $p$ is a prime number with $(p,N)=1.$
Using the Eichler-Selberg trace formula, for a fixed prime $p$, Serre \cite{JP} proved $\{\lambda_p(f)\,:\,  (p,N)=1$, $f \in \mathcal{F}_{k}(N)\}$ is equidistributed in $[-2,2]$ with respect to the measure $ \mu_p$  as $k+N\rightarrow \infty$. 
In other words, $\mu_{k,N}$ converges weakly to $\mu_p$  in $[-2,2],$ as $k+N\rightarrow \infty$ with $\gcd(p,N)=1.$ 
Given two probability measures $\mu_1$ and $\mu_2$ on a closed interval $\Omega \subset \mathbb{R},$ the discrepancy between $\mu_1$ and $\mu_2$ is given by 
$$ D(\mu_1,\mu_2) := \sup\{   |\mu_1(I)-\mu_2(I)|\, :\, I=[a,b]\subset \Omega \}.$$
The best known upper bound for $D(\mu_{k,N},\mu_p)$ is \begin{equation}\label{ubforDmu}
    D(\mu_{k,N},\mu_p)=O\Big((\log kN)^{-1}\Big),
    \end{equation}
    as proved by Murty and Sinha\cite{RK}.  
    
Let $S_k(N)^*$ be the space of primitive cusp forms with even integer weight $k$ and level $N$. Let $\tilde{T}_n$ be the restriction of the Hecke operator $T_n$ from $S_k(N)$ to its subspace $S_k(N)^*.$ Let 
$\mathcal{F}_k(N)^*$ be an orthonormal basis of $S_k(N)^*$ consisting only of joint eigenfunctions of the Hecke operators $\tilde{T}_n.$ 
We denote $\mu_{k,N}^*$ to be the corresponding measure associated to $\tilde{T}_p$ i.e. $$\mu_{k,N}^*:= \frac{1}{\text{dim}(S_k(N)^*)}\sum_{f\in \mathcal{F}_k(N)^*}\delta_{\lambda_p(f)}. $$
For a fixed prime $p$, it is not very difficult to deduce from Serre's theorem that 
$\mu^*_{k,N}$ converges weakly to $\mu_p$ as $k+N\rightarrow \infty$ with $\gcd(p,N)=1.$ 
An upper bound similar to equation \eqref{ubforDmu} can also be obtained for $D(\mu_{k,N}^*,\mu_p)$ (see \cite{MS2}). In recent work, Sarnak and Zubrilina \cite{SZ} give improved uniform estimates for $D(\mu_{2,N}^*,\mu_p)$ with the help of a new technique involving the use of the Petersson trace formula.

We have been discussing upper bounds for the discrepancies $D(\mu_{k,N}^*,\mu_p)$ and $D(\mu_{k,N},\mu_p)$.  A natural question that arises in this context is if one can find lower bounds or $\Omega$-type estimates for these discrepancies.
 For a fixed squarefree level N, Jung and Sardari \cite[Theorem 1.1]{JS} give us a sequence of weights $k_n$ with $k_n \rightarrow \infty  $ such that 
\begin{equation}\label{Discrepancych1}
 D(\mu_{k_n,N}^*,\mu_p)\gg \frac{1}{k_n^{\frac{1}{3}}\log^2 k_n}.     
\end{equation}

%Let $H_{k,N}^*$ denote a basis of arithmetically normalized Hecke eigenforms in the space orthogonal to oldforms. By arithmetically normalized we mean $a_1(f)=1$ for $f\in H_{k,N}^*$.
We now shift our focus to the distribution of $\{\lambda_{p^2}(f):f \in \mathcal{F}_k(N)^*\}$ which has been well investigated.
Let  
$$\mu_{p^2}(x)=\frac{p+1}{2 \pi}\frac{1}{(\sqrt{p}+\sqrt{p^{-1}})^2-(x+1)}\sqrt{\frac{3-x}{x+1}} $$
for $x\in[-1,3]$.
In 2009, Omar and Mazhouda \cite[Theorem 1]{OM} showed that $\{\lambda_{p^2}(f)\,:\, f\in \mathcal{F}_k(N)^*\}$ is equidistributed with respect to the measure $\mu_{p^2}(x)$  as $k\rightarrow \infty.$
Let $$\mu^*_{k,N,2}= \frac{1}{\text{dim}(S_k(N)^*)}\sum_{f\in \mathcal{F}_k(N)^*}\delta_{\lambda_{p^2}(f)} $$
For $N=1$, Tang and Wang \cite[Theorem 1]{TW} show the following analogue  of the equation \eqref{ubforDmu}  
\begin{equation}\label{ubforDmu2}
D(\mu^*_{k,1,2},\mu_{p^2})=\O\Big((\log k)^{-1}\Big).
\end{equation}

Let us consider the harmonic weight given by  $$\omega_{k,f}:=\frac{\Gamma(k-1)}{(4\pi)^{k-1}} |a_1(f)|^2$$ for $f\in  \mathcal{F}_k(N)$ (or $\mathcal{F}_k(N)^*$). The associated  weighted variant of the measure defined by equation \eqref{mukndefn} (see also equation \eqref{justdelta(1,n)}) is given by 
\begin{equation}\label{defmun}
\nu_{k,N}:=\sum_{f\in \mathcal{F}_k(N)}\omega_{k,f} \, \delta_{\lambda_p(f)}.
\end{equation}
We denote $\nu_{k,N}^*$ to be the corresponding weighted measure associated to $\tilde{\mathcal{T}}_p,$
i.e. $$\nu_{k,N}^*:=\sum_{f\in \mathcal{F}_k(N)^*} \omega_{k,f} \,\delta_{\lambda_p(f)}.$$
Let us consider the Sato-Tate measure defined  by
$$ \mu_\infty(x):=\frac{1}{\pi}\sqrt{1-\frac{x^2}{4}} 
$$
 for $x\in [-2,2].$ 
One can use the Petersson trace formula to show that both $\nu_{k,N}$ and $\nu_{k,N}^*$ converge weakly to $\mu_\infty $ as $k+N \rightarrow \infty $ with $\gcd(p,N)=1. $
For fixed squarefree levels, Theorem 1.6 of \cite{JS} gives us a sequence of weights $k_n$ with $k_n \rightarrow \infty  $ for which we have the following lower bound 
\begin{equation}\label{JSlb}
    D(\nu_{k_n,N}^*,\mu_\infty)\gg \frac{1}{k_n^{\frac{1}{3}}\log^2 k_n}. 
\end{equation}  
We obtain the following variant of  \eqref{JSlb} in the context of the space $S_k(N).$  
\begin{theorem}\label{Main theorem 1}
    Let $N=2^a b$ with $a=0,1,2$ and $b$  odd.  There exists an infinite sequence of weights $k_n$ with $k_n\rightarrow \infty $ such that $$
D(\nu_{k_n,N},\mu_\infty)\gg_N \frac{1}{k_n^{\frac{1}{3}} \log^2 k_n}. 
$$
\end{theorem}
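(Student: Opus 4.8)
The strategy follows the Jung--Sardari template, with the key new ingredient being an asymptotic form of the Petersson trace formula valid for levels $N = 2^a b$ with $a \le 2$. First I would recall the Petersson trace formula: for $(n,N)=1$,
\[
\sum_{f \in \mathcal{F}_k(N)} \omega_{k,f}\, \lambda_n(f) = \delta_{n,1} + 2\pi i^{-k} \sum_{c \equiv 0 \,(\mathrm{mod}\, N)} \frac{S(n,1;c)}{c}\, J_{k-1}\!\left(\frac{4\pi \sqrt{n}}{c}\right).
\]
The plan is to use this with $n = p^r$ to convert moments of the measure $\nu_{k,N}$ against Chebyshev polynomials $U_r$ (since $\lambda_{p^r}(f) = U_r(\lambda_p(f)/2)$ up to normalization, via Hecke relations) into a main term $\delta_{r,0}$ plus a Kloosterman/Bessel error term. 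The measure $\mu_\infty$ is precisely the measure whose Chebyshev moments vanish for $r \ge 1$ and equal $1$ for $r=0$, so the discrepancy $D(\nu_{k,N},\mu_\infty)$ is controlled below by the size of the largest surviving Kloosterman-sum term.

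Next I would carry out the asymptotic analysis of the Bessel sum. The dominant contribution comes from the smallest modulus $c = N$ (the term with $c = N$ in the sum over $c \equiv 0 \pmod N$); for $c$ of size around $\sqrt{p^r} \asymp p^{r/2}$ one uses the transition range behavior of $J_{k-1}$, and choosing $r = r_n$ so that $p^{r_n/2} \asymp k_n$ puts us in the oscillatory regime where $J_{k-1}(x)$ is of size $\asymp k^{-1/3}$ near $x \asymp k$. This is the step that produces the $k_n^{-1/3}$ in the bound, exactly as in \cite{JS}; the extra $\log^2 k_n$ loss comes from the standard conversion between a pointwise bound on a smoothed count and a bound on $D(\cdot,\cdot)$ via a test function / Beurling--Selberg majorant argument. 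The crucial point — and the reason the hypothesis $a \le 2$, i.e. $8 \nmid N$, appears — is that one needs the leading Kloosterman sum $S(p^{r_n},1;N)$ (or more precisely the relevant Kloosterman-type sum appearing in the refined asymptotic established earlier in the paper) to be \emph{nonzero}, so that the main oscillatory term does not accidentally cancel. For $8 \mid N$ the relevant character sum can vanish identically, which is why the result is stated only for $a = 0,1,2$.

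The main obstacle is precisely establishing and exploiting this non-vanishing: one must identify, in the asymptotic expansion of the Petersson formula for $S_k(N)$ (not just primitive forms, so there is no Atkin--Lehner sieving to contend with, but one must handle the full space including oldforms), a single term whose coefficient is a concretely computable Kloosterman or Salié-type sum, and show it is bounded away from $0$ in absolute value for infinitely many admissible $r_n$. Granting that non-vanishing (which I would extract from the paper's asymptotic for the Petersson trace formula together with a Weil-type lower bound or an explicit evaluation modulo the 2-part of $N$), the rest is bookkeeping: pick $k_n \to \infty$ along a subsequence where the Bessel function is at a near-extremum of size $\asymp k_n^{-1/3}$ at the argument $4\pi p^{r_n/2}/N$, deduce that some Chebyshev moment of $\nu_{k_n,N}$ deviates from its $\mu_\infty$-value by $\gg_N k_n^{-1/3}$, and translate this into the claimed lower bound on the discrepancy by the usual duality between moment deviations and interval discrepancies, absorbing the $\log^2 k_n$ in the process.
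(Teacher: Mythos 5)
Your plan matches the paper's proof in all essentials: take $k_n \approx 4\pi p^{r_n/2}/N$ so that the $c=N$ term of the Petersson formula sits in the Bessel transition range of size $\asymp k_n^{-1/3}$, invoke the non-vanishing of $S(1,p^{r_n},N)$ (which is exactly where the hypothesis $8\nmid N$ enters, and which forces $r_n$ even so the Kloosterman sum reduces to a Sali\'e-type evaluation plus an explicit check at moduli $2$ and $4$), observe that the corresponding Chebyshev moment of $\mu_\infty$ vanishes, and convert the moment deviation into a discrepancy bound at the cost of a factor $(\deg X_{r_n})^2 \asymp \log^2 k_n$ — the paper does this last step by integration by parts with $|X_{2n}'|\ll n^2$ rather than a Beurling--Selberg argument, but this is the same duality you describe. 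The proposal is correct and follows the paper's route.
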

One of the main ingredients in the proof of Theorem \ref{Main theorem 1} that enables us to generalize levels to the above form is the non-vanishing aspect of a certain Kloosterman sum mentioned in  Theorem \ref{asymptote}.

We now consider the measure 
$$\mu_{\infty,2} (x)=\frac{1}{2\pi} \sqrt{\frac{3-x}{1+x}} $$ for $x\in [-1,3]$ and an analogue of the measure $\nu_{k,N} $ given by 
$$\nu_{k,N,2}^:=\sum_{f\in \mathcal{F}_k(N)} \omega_{k,f} \, \delta_{\lambda_{p^2}(f)}.$$
As $p\rightarrow \infty,$ just as $\mu_p(x)\rightarrow\mu_\infty(x),$ we note that $\mu_{p^2}(x)\rightarrow \mu_{\infty,2} (x)$ (see \cite[Theorem 3]{OM}).
Using the Petersson trace formula, it can be shown that  $\nu_{k,N,2}$ converges weakly to $\mu_{\infty,2} $ as $k+N \rightarrow \infty $ with $\gcd(p,N)=1$ (see \cite[Theorem 3]{OM}). 
In this article, we provide a sequence of weights $k_n$ with $k_n \rightarrow \infty  $ for which a similar lower bound   as in equation \eqref{JSlb} for the discrepancy   $D(\nu_{k_n,N,2}, \mu_{\infty,2})$
 holds. More precisely, we have the following theorem. \begin{theorem}\label{Main theorem 2}
Let the level $N$ be fixed. Further let  $N=2^a b$ with $a=0,1,2$ and $b$  odd. Then there  exists a sequence of weights $k_n$ with $k_n\rightarrow \infty$ such that  $$D(\mu_{\infty,2},\nu_{k_n,N,2}) \gg_{N} \frac{1}{(\log k_n)^{3}(k_n)^{\frac{1}{3}}} . $$
\end{theorem}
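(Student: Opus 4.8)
The plan is to mimic the strategy used for Theorem \ref{Main theorem 1}, replacing the role of $\lambda_p$ by $\lambda_{p^2}$ and the measure $\mu_\infty$ by $\mu_{\infty,2}$. The starting point is the asymptotic Petersson trace formula of Theorem \ref{asymptote}, applied with index $m = p^2$. Using the Hecke relation $\lambda_p(f)^2 = \lambda_{p^2}(f) + 1$, we can express $\sum_{f \in \mathcal{F}_k(N)} \omega_{k,f}\, \lambda_{p^2}(f)^j$ in terms of the harmonic sums $\sum_f \omega_{k,f}\, \lambda_{p^{2i}}(f)$ for $0 \le i \le j$, equivalently in terms of $\sum_f \omega_{k,f}\, \lambda_{p}(f)^{2i}$; multiplicativity reduces each of these to a single Petersson sum with index a power of $p$, whose main term is the relevant moment of $\mu_{\infty,2}$ and whose error term is governed by a Kloosterman sum contribution. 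The key input inherited from Theorem \ref{asymptote} is that this Kloosterman sum is \emph{non-vanishing} for the levels $N = 2^a b$ with $a \le 2$, which is precisely what forces the error term to have a genuine oscillating main part rather than being absorbed.

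First I would fix a test function: following Jung--Sardari, take a suitable polynomial (or a smooth bump approximated by polynomials via Jackson-type estimates) $P$ of degree about $k^{1/3}$, supported near the spectral edge $x = -1$ (where the density $\mu_{\infty,2}$ vanishes like a square root), normalized so that $\int P \, d\mu_{\infty,2}$ and the sup-norm of $P$ on $[-1,3]$ are controlled. Evaluating $\int P \, d\nu_{k,N,2} = \sum_f \omega_{k,f}\, P(\lambda_{p^2}(f))$ via the moment expansion above, the main term is $\int P \, d\mu_{\infty,2}$ and the secondary term, coming from the Kloosterman/Bessel part of Theorem \ref{asymptote}, is of size roughly $c_N(k) \cdot (\text{something growing in } \deg P)$ where $c_N(k)$ involves the non-vanishing Kloosterman sum and the Bessel function $J_{k-1}$ evaluated at the relevant argument. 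The classical fact that $J_{k-1}$ oscillates and does not decay too fast near its transition range lets one choose an infinite sequence $k_n \to \infty$ along which this secondary term is bounded below in absolute value by a positive quantity $\gg_N k_n^{-1/3}$ up to $\log$ losses. Optimizing the degree of $P$ against the sup-norm growth (the degree-$d$ polynomial supported near the edge has sup-norm growing like $d$, and the Bessel term contributes a factor like $d / k$) yields the discrepancy lower bound $\gg_N (k_n)^{-1/3}(\log k_n)^{-3}$, where the two extra logarithms over \eqref{JSlb} arise from the square-root edge behaviour of $\mu_{\infty,2}$ at $x=-1$ and the polynomial approximation, exactly as in the $\mu_\infty$ case.

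The translation from $\lambda_{p^2}$-moments to Petersson sums is where the arithmetic of the level matters: I would need to check that applying Theorem \ref{asymptote} with index $p^{2i}$ still produces a Kloosterman sum of the controlled, non-vanishing type for $N = 2^a b$, $a \le 2$ — this should follow directly from the statement of Theorem \ref{asymptote} since $(p, N) = 1$ and only the $c \equiv 0 \pmod{N}$ terms survive, so the relevant sum is essentially a Kloosterman sum modulo $N$ (times a modulus coprime to $N$) with the non-vanishing established there. The main obstacle I anticipate is bookkeeping the dependence on $N$ and on $p$ cleanly through the Hecke-relation unfolding, and ensuring that the lower bound survives the subtraction of all lower-order moments: one must verify that the oscillating secondary term attached to the top index $p^{2d}$ genuinely dominates and is not cancelled by the secondary terms of lower indices. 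As in \cite{JS}, this is handled by choosing $P$ so that its expansion in (shifted) Chebyshev-type polynomials adapted to $\mu_{\infty,2}$ is concentrated on the top degree, making the top secondary term the leading contribution; I expect the argument to go through with only the constants changing, the substantive new content being the non-vanishing Kloosterman sum from Theorem \ref{asymptote} that licenses the extension to $a = 1, 2$.
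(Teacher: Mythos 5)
Your proposal contains the correct core ingredients — the non-vanishing of $S(1,p^{2m},N)$ for $N=2^ab$, $a\le 2$, the lower bound $J_{k-1}(k-1+d(k-1)^{1/3})\gg (k-1)^{-1/3}$ in the Bessel transition range, and the reduction of $\lambda_{p^2}$-moments to Petersson sums $\Delta_{k,N}(1,p^{2i})$ — but the test-function strategy you describe would not deliver the stated bound. The degree of the test polynomial here is \emph{not} a free parameter to be optimized, and it is not $\approx k^{1/3}$: the whole mechanism hinges on Theorem \ref{asymptote}(ii), which requires the top Hecke index $p^{2d}$ to satisfy $|4\pi p^{d}/N-(k-1)|<(k-1)^{1/3}$, and this pins $d\asymp \log k_n$ (the paper takes $k_n=[4\pi p^{2n+1}/N]$ and tests against $Q_{2n+1}=Y_1+Y_3+\cdots+Y_{2n+1}$, of degree $2n+1$, where $X_{2m}=Y_m\circ X_2$ so that $\int Q_{2n+1}\,d\nu_{k_n,N,2}=\sum_{i=0}^{n}\Delta_{k_n,N}(1,p^{4i+2})$). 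The conversion from $|\int P\,d(\nu-\mu_{\infty,2})|$ to the discrepancy costs $\|P'\|_\infty$, which for a degree-$d$ polynomial of this type is $\gg d^3$ (Lemma \ref{bound for Q'}); with $d\approx k^{1/3}$ this factor is $\approx k$ and annihilates the $k^{-1/3}$ gain, whereas with $d\asymp\log k_n$ it costs only $(\log k_n)^3$ — which is exactly where the exponent $3$ in the statement comes from (one power of $\log$ more than in \eqref{JSlb} because $Q_{2n+1}$ is a \emph{sum} of $\sim n$ Chebyshev-type polynomials, so $\|Q'_{2n+1}\|_\infty\ll n^3$ rather than $n^2$). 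The "degree $\approx k^{1/3}$ polynomial concentrated at the spectral edge" device belongs to the proof of the unweighted discrepancy bound \eqref{Discrepancych1} in Jung--Sardari, not to the harmonic-weight statement being proved here.

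The second gap is your treatment of the lower-order terms. You propose to handle them by choosing $P$ whose expansion is "concentrated on the top degree"; but the polynomial actually used, $Q_{2n+1}$, has all coefficients equal to $1$ in the $Y_{2i+1}$ basis, so no such concentration occurs, and in any case concentration of coefficients alone would not suffice. What makes $\sum_{i=0}^{n-1}\Delta_{k_n,N}(1,p^{4i+2})$ negligible is an analytic fact about the Bessel function: for $i<n$ the argument $4\pi p^{2i+1}/(bN)$ is at most $\tfrac{5}{18}(k_n-1)$, deep in the regime where $J_{k_n-1}$ decays exponentially, so each such term is $O_N\left(\left(\tfrac{5e}{18}\right)^{k_n-1}(k_n-1)^{-1/3}\right)$ and the whole sum of $n\asymp\log k_n$ of them is $\o(k_n^{-1/3})$ (this is Lemma \ref{asymptote 2}(i) in the paper, which cannot be read off from Theorem \ref{asymptote}(i) since those indices violate its hypothesis). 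Without this estimate your argument has no control on the lower moments, and with your choice of degree the final optimization does not close. To repair the proposal: fix $k_n=[4\pi p^{2n+1}/N]$, test against $Q_{2n+1}$, use orthogonality \eqref{intQn}, isolate the top term via Theorem \ref{asymptote}(ii) together with Lemma \ref{main lemma}, kill the lower terms by the exponential Bessel decay, and pay $(\log k_n)^3$ in the integration-by-parts step.
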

 Again, the non-vanishing aspect of a certain Kloosterman sum is used to arrive at the proof of Theorem  \ref{Main theorem 2}.

\subsection*{Organization of the article}The structure of the paper is as follows. In Section \ref{asydel}, we derive an asymptotic for the Petersson trace formula under a given condition.  In Section \ref{NVKlsum}, we discuss the non-vanishing of certain classical Kloosterman sums. 
In Sections \ref{lampf} and \ref{lamp2f},
we discuss the proofs of  Theorems \ref{Main theorem 1} and \ref{Main theorem 2}
respectively.
\section{An asymptotic formula for $\Delta_{k,N}(m,n)$}\label{asydel}
In 1932, Petersson expressed  a weighted sum of $\overline{a_m(f)}a_n(f)$  over $f$ with $f\in \mathcal{F}_k(N)$ in terms of a Bessel function of the first kind and a Kloosterman sum (see 
 \cite{PH}). 
 For Hecke operators, this is the earliest weighted trace formula,
about 22 years earlier than the unweighted trace formula which was proved by Selberg\cite{Sel} in 1956.  We use the version of Petersson's trace formula given by \cite[Proposition 2.1]{ILS}.
Let  $$\rho_n(f)=\Bigg(\frac{\Gamma(k-1)}{ (4\pi )^{k-1}}\Bigg)^{\frac{1}{2}}   a_n(f)$$ and  
$$\Delta_{k,N}(m,n)=\sum_{f\in \mathcal{F}_k(N)}\overline{\rho_m(f)}\rho_n(f).$$
On taking   $m=1 $ in the above, we have 
\begin{align*}
   & \Delta_{k,N}(1,n)=\sum_{f\in \mathcal{F}_k(N)}\overline{\rho_1(f)}\rho_n(f) \\&
   =\sum_{f\in \mathcal{F}_k(N)} \left(\frac{\Gamma(k-1)}{ (4\pi )^{k-1}}\right) \overline{a_1(f)}a_n(f)=\sum_{f\in \mathcal{F}_k(N)} \left(\frac{\Gamma(k-1)}{ (4\pi )^{k-1}}\right) |a_1(f)|^2 \lambda_n(f), 
\end{align*}
since  $a_n(f)=a_1(f)\lambda_n(f)$.
Hence 
\begin{equation}\label{justdelta(1,n)}
\Delta_{k,N}(1,n)=\sum_{f\in \mathcal{F}_k(N)} \left(\frac{\Gamma(k-1)}{ (4\pi )^{k-1}}\right) |a_1(f)|^2\lambda_n(f).
\end{equation}
Equation \eqref{justdelta(1,n)} serves two purposes. 
It justifies the choice of variants $\nu_{k,N}$ and $\nu^*_{k,N}$ of $\mu_{k,N}$ and $\mu^*_{k,N}$ respectively.  Further, it indicates how Petersson's trace formula is indeed a weighted trace formula for Hecke operators.  Given integer $m,n$ and a natural number $c$, the Kloosterman sum $S(m,n,c)$  is defined to be $$S(m,n,c)=\sum_{x (\text{mod} \hspace{1 mm} c),\, \gcd(x,c)=1} e\left(\frac{mx+n\overline{x}(c)}{c}\right)$$    
where $\overline{x}(c)$ denotes the multiplicative inverse of $x$ 
  modulo $c$ and $e(x)=e^{2\pi ix}.$ 
  The Bessel functions of the first kind $J_a(x)$ for $a\geq 0$, can be defined by the following  power series representation 
$$
J_a(x)=\sum_{j=0}^{\infty } \frac{(-1)^j}{\Gamma(j+1)\Gamma(a+j+1)}\left(\frac{x}{2}\right)^{a+2j},
$$
where $\Gamma(x) $ denotes the Gamma function evaluated at $x$. 
 Now we recall Petersson's trace formula for $S_k(N).$
\begin{theorem}[{{\cite[Proposition\ 2.1]{ILS}}}] \label{PTf}
 Let $m,n, N$ be natural numbers and $k$ be an even natural number. We have $$\Delta_{k,N}(m,n)=\delta(m,n)+2\pi i^k \sum_{N|c,c>0} \frac{S(m,n;c)}{c}J_{k-1}\Big(\frac{4\pi\sqrt{mn}}{c}\Big).$$
\end{theorem}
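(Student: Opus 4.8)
The plan is to run the classical Poincar\'e-series argument, which is the route taken in \cite[Proposition~2.1]{ILS} and goes back in essence to Petersson \cite{PH}. For each integer $m\ge 1$ introduce the weight-$k$, level-$N$ Poincar\'e series
$$P_m(z)=\sum_{\gamma\in\Gamma_\infty\backslash\Gamma_0(N)} j(\gamma,z)^{-k}\,e\big(m\,\gamma z\big),\qquad j(\gamma,z)=cz+d\ \ \text{for}\ \ \gamma=\left(\begin{smallmatrix}a&b\\ c&d\end{smallmatrix}\right),$$
which converges absolutely for $k\ge 4$ and lies in $S_k(N)$ (for $k=2$ one inserts the usual Hecke convergence factor; in any case the weights $k_n\to\infty$ relevant here are large). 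The first step is the \emph{unfolding identity}: for every $g\in S_k(N)$ with Fourier expansion $g(z)=\sum_{n\ge1}a_g(n)e(nz)$ at $\infty$, replacing the sum over $\Gamma_\infty\backslash\Gamma_0(N)$ by an integral over $\Gamma_\infty\backslash\mathbb{H}$ and evaluating the resulting $\Gamma$-integral yields
$$\langle g,\,P_m\rangle=\frac{\Gamma(k-1)}{(4\pi m)^{k-1}}\,a_g(m);$$
thus $P_m$ detects, up to this explicit constant, the $m$-th Fourier coefficient.

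Secondly, I would compute the Fourier expansion of $P_m$ directly. Decomposing $\Gamma_\infty\backslash\Gamma_0(N)$ according to the lower-left entry $c$ (either $c=0$, the trivial coset, or $c$ a positive multiple of $N$): the coset $c=0$ contributes the term $\delta(m,n)$ to the $n$-th coefficient; for each $c>0$, summing the additive character over the relevant residues modulo $c$ produces the Kloosterman sum $S(m,n;c)$, and the remaining archimedean integral, of the shape $\int_{\mathbb{R}}(t^{2}+1)^{-k/2}e(\ldots)\,dt$, equals a constant multiple of $J_{k-1}\!\big(4\pi\sqrt{mn}/c\big)$ by a standard Bessel integral. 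Assembling these pieces, the $n$-th Fourier coefficient of $P_m$ is
$$a_{P_m}(n)=\Big(\tfrac{n}{m}\Big)^{\frac{k-1}{2}}\!\left[\delta(m,n)+2\pi i^{k}\!\!\sum_{\substack{N\mid c\\ c>0}}\frac{S(m,n;c)}{c}\,J_{k-1}\!\Big(\frac{4\pi\sqrt{mn}}{c}\Big)\right],$$
where we use that $k$ is even so that $i^{-k}=i^{k}$.

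Finally, expanding $P_m$ in the orthonormal basis $\mathcal{F}_k(N)$ gives $a_{P_m}(n)=\sum_{f\in\mathcal{F}_k(N)}\langle P_m,f\rangle\,a_f(n)$; inserting $\langle P_m,f\rangle=\overline{\langle f,P_m\rangle}=\tfrac{\Gamma(k-1)}{(4\pi m)^{k-1}}\overline{a_f(m)}$ from the first step and comparing with the second step gives
$$\frac{\Gamma(k-1)}{(4\pi)^{k-1}(mn)^{\frac{k-1}{2}}}\sum_{f\in\mathcal{F}_k(N)}\overline{a_f(m)}\,a_f(n)=\delta(m,n)+2\pi i^{k}\!\!\sum_{\substack{N\mid c\\ c>0}}\frac{S(m,n;c)}{c}\,J_{k-1}\!\Big(\frac{4\pi\sqrt{mn}}{c}\Big).$$
Since $a_f(n)=a_n(f)\,n^{(k-1)/2}$ and $\rho_n(f)=\big(\Gamma(k-1)/(4\pi)^{k-1}\big)^{1/2}a_n(f)$, the left-hand side is precisely $\Delta_{k,N}(m,n)=\sum_{f}\overline{\rho_m(f)}\rho_n(f)$, and the stated formula follows. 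I expect the one genuinely delicate point to be the second step — evaluating the archimedean integral as the Bessel function $J_{k-1}$ and justifying convergence both of the Poincar\'e series and of the resulting sum over $c$ — while the normalisation bookkeeping in the last step is routine.
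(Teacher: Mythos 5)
The paper does not prove this statement; it is quoted verbatim from \cite[Proposition 2.1]{ILS} (with $i^{-k}$ rewritten as $i^{k}$, legitimate since $k$ is even). Your sketch correctly reproduces the standard Poincar\'e-series argument used there — unfolding to get $\langle g,P_m\rangle=\Gamma(k-1)(4\pi m)^{-(k-1)}a_g(m)$, the Bruhat/coset decomposition yielding $\delta(m,n)$ plus Kloosterman sums times $J_{k-1}$, and the spectral expansion in the orthonormal basis — and the normalisation bookkeeping matches the paper's conventions, so this is essentially the same (and the intended) proof.
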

\begin{lemma}[{{\cite[Section 2.1.1]{JS}}}] \label{Bessel}
We have the following estimates for the J-Bessel function.\\
(i) If $a \geq 0$ and $0<x\leq 1$, we have $$1\leq \frac{J_a(ax)}{x^aJ_a(a)}\leq e^{a(1-x)},$$
(ii)
$0<J_a(a) \ll \frac{1}{a^{\frac{1}{3}}}$ as $a\rightarrow \infty $,
\\
(iii) If $|d|<1$, then $$\frac{1}{a^{\frac{1}{3}}} \ll  J_a(a+da^{\frac{1}{3}})\ll \frac{1}{a^{\frac{1}{3}}}. 
$$
\end{lemma}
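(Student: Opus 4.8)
The plan is to prove the three estimates separately: part (i) by elementary manipulations with Bessel recurrences, and parts (ii) and (iii) by the classical turning-point (Airy) asymptotics for $J_\nu$. Throughout I will use the classical inequality that the first positive zero $j_{\nu,1}$ of $J_\nu$ satisfies $j_{\nu,1}>\nu$ for every $\nu>0$, so that $J_\nu(y)>0$ on $(0,\nu]$, and likewise $J_{\nu+1}(y)>0$ on $(0,\nu+1]\supseteq(0,\nu]$.

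For part (i) I would start from the differentiation formula $\frac{d}{dy}\bigl(y^{-a}J_a(y)\bigr)=-y^{-a}J_{a+1}(y)$. Setting $g(x):=J_a(ax)/x^{a}=a^{a}(ax)^{-a}J_a(ax)$ this gives
$$\frac{d}{dx}\log g(x)=-\frac{a\,J_{a+1}(ax)}{J_a(ax)},\qquad 0<x\le 1.$$
Since $J_a(ax),J_{a+1}(ax)>0$ for $x\in(0,1]$ by the positivity remark above, $\log g$ is nonincreasing on $(0,1]$, hence $g(x)\ge g(1)$, which is exactly $J_a(ax)\ge x^{a}J_a(a)$, the lower bound. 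For the upper bound I would integrate,
$$\log\frac{J_a(ax)}{x^{a}J_a(a)}=\log g(x)-\log g(1)=\int_x^1\frac{a\,J_{a+1}(at)}{J_a(at)}\,dt\le a(1-x),$$
which needs $J_{a+1}(y)/J_a(y)\le 1$ for $0<y\le a$. This I would deduce from the continued-fraction expansion $\frac{J_{a+1}(y)}{J_a(y)}=\cfrac{y/2}{(a+1)-\cfrac{(y/2)^{2}}{(a+2)-\cdots}}$: for $0<y\le a$ an easy induction shows each tail denominator $D_n$ satisfies $D_n\ge (a+n)-y/2\ge y/2$, whence $\frac{J_{a+1}(y)}{J_a(y)}=\frac{y/2}{D_1}\le\frac{y}{a+2}<1$. (The case $a=0$ is trivial, both sides being $1$.)

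For parts (ii) and (iii) I would appeal to the uniform asymptotic expansion of $J_\nu(\nu z)$ near the turning point $z=1$. Writing $z=1+d\nu^{-2/3}$, so $\nu z=\nu+d\nu^{1/3}$, Olver's uniform expansion yields
$$J_\nu\bigl(\nu+d\nu^{1/3}\bigr)=\frac{2^{1/3}}{\nu^{1/3}}\,\mathrm{Ai}\!\bigl(-2^{1/3}d\bigr)\bigl(1+O(\nu^{-1/3})\bigr),\qquad \nu\to\infty,$$
uniformly for $d$ in a fixed bounded set (at $z=1$ the prefactor $\bigl(4\zeta/(1-z^{2})\bigr)^{1/4}\to 2^{1/3}$ and $\nu^{2/3}\zeta\to -2^{1/3}d$, where $\zeta=\zeta(z)$ is the usual Langer variable). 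Because the largest negative zero of $\mathrm{Ai}$ is $\approx-2.338<-2^{1/3}\approx-1.26$, the quantity $\mathrm{Ai}(-2^{1/3}d)$ is bounded above and below by positive constants for all $|d|\le 1$; this gives $J_a(a+da^{1/3})\asymp a^{-1/3}$, which is (iii), and taking $d=0$ gives $J_a(a)=\bigl(2^{1/3}\mathrm{Ai}(0)+o(1)\bigr)a^{-1/3}$ with $\mathrm{Ai}(0)=3^{-2/3}/\Gamma(2/3)>0$, the bound in (ii). The strict positivity $J_a(a)>0$ for every $a>0$ is again $j_{a,1}>a$. (When $a$ is an integer — the only case needed later, since $a=k-1$ — the upper bound in (ii) alternatively follows from $J_a(a)=\frac1\pi\int_0^\pi\cos\bigl(a(\theta-\sin\theta)\bigr)\,d\theta$ together with van der Corput's lemma, since $\phi(\theta)=\theta-\sin\theta$ has $\phi'(0)=\phi''(0)=0$ and $\phi'''(0)=1$, forcing $|J_a(a)|\ll a^{-1/3}$.)

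\textbf{Main obstacle.} The only genuinely nontrivial input is the uniform Airy-type asymptotic behind (iii) and the sharp exponent in (ii): one needs the expansion of $J_\nu(\nu z)$ to hold uniformly as $z\to 1$, with the error term controlled up to the endpoints $|d|=1$, which I would quote from Olver's work rather than reprove. Part (i) is entirely elementary once the Bessel-ratio bound $J_{a+1}(y)/J_a(y)\le 1$ on $(0,a]$ is in hand.
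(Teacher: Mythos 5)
The paper does not actually prove this lemma: it is imported wholesale from \cite[Section 2.1.1]{JS} (where it in turn rests on classical references), so there is no in-paper argument to compare against. Your reconstruction is essentially the standard one and is sound. For (i) you have rediscovered the proof of Paris's inequality: the identity $\frac{d}{dy}\bigl(y^{-a}J_a(y)\bigr)=-y^{-a}J_{a+1}(y)$ plus positivity of $J_a$ below its first zero $j_{a,1}>a$ gives the lower bound, and the upper bound reduces to $J_{a+1}(y)\leq J_a(y)$ on $(0,a]$. That ratio bound is true and classical; my only quibble is that your ``easy induction'' on the tail denominators of the infinite continued fraction needs a starting point (one should either work with the convergents and pass to the limit, or run the equivalent recursion $r_n=\frac{(y/2)^2}{(a+n+1)-r_{n+1}}$ for $r_n=\frac{y}{2}J_{a+n+1}(y)/J_{a+n}(y)$, which tends to $0$ and so gives the induction a base; alternatively the three-term recurrence yields $J_a(y)-J_{a+1}(y)>J_{a+1}(y)-J_{a+2}(y)>\cdots\to 0$ directly). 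For (ii) and (iii) quoting Olver's uniform turning-point asymptotics, or equivalently the transition-region formula $J_\nu(\nu+d\nu^{1/3})=2^{1/3}\nu^{-1/3}\mathrm{Ai}(-2^{1/3}d)+\O(\nu^{-2/3})$ uniformly for bounded $d$, is exactly the intended source of the exponent $\frac13$; since the largest negative zero of $\mathrm{Ai}$ is near $-2.338<-2^{1/3}$, the Airy factor is bounded above and below by positive constants for $|d|\leq 1$, which gives (iii) and, at $d=0$, the sharp form of (ii) (your constant $2^{1/3}\mathrm{Ai}(0)=2^{1/3}3^{-2/3}/\Gamma(2/3)$ is the correct one). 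Writing the error multiplicatively as $1+\O(\nu^{-1/3})$ is slightly loose, but harmless precisely because the Airy factor is bounded away from zero on the range used; the van der Corput alternative for integer $a$ is also fine for the upper bound in (ii). In short: correct, modulo the small bookkeeping point about the continued-fraction induction, and it supplies a proof where the paper only cites.
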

\begin{remark}
    It is worth noting that Lemma  \ref{Bessel}(iii) is important for finding a lower bound for Theorem \ref{asymptote}. 
The geometric origin of  Lemma  \ref{Bessel} (iii) has been discussed in \cite[Section 5]{JS}.
\end{remark}
Now we prove the following asymptotic of Petersson's trace formula that will be heavily used in future sections.  
\begin{theorem}\label{asymptote}
 Let $m,n, N$ be natural numbers and $k$ be an even natural number.  Let $N$ be fixed and  $m,n$ be such that $\left|\frac{4\pi\sqrt{mn}}{N}-(k-1)\right|<(k-1)^{\frac{1}{3}},$ then
\begin{align*}
      (i)\,\, \Delta_{k,N}(m,n)=\delta(m,n)+2\pi i^k \frac{S(m,n,N)}{N}J_{k-1}\left(\frac{4\pi\sqrt{mn}}{N} \right) +\O\left(\frac{e^{(k-1)\left(1-\frac{4}{9}-\log(\frac{9}{5})\right)}}{(k-1)^{\frac{1}{3}}}\right).
    \end{align*}
Furthermore, if $S(m,n,N)\neq 0$, then 
\begin{align*}
     (ii)\,\,\big| \Delta_{k,N}(m,n)-\delta(m,n)\big|\gg (k-1)^{-\frac{1}{3}}.
\end{align*}
\end{theorem}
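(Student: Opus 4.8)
The plan is to isolate the $c=N$ term in Petersson's formula (Theorem~\ref{PTf}) and show that everything else is exponentially small. Put $a=k-1$ and $X=\frac{4\pi\sqrt{mn}}{N}$, so the hypothesis reads $|X-a|<a^{1/3}$; in particular $X<a+a^{1/3}<2a$. The moduli $c$ with $N\mid c$ and $c>N$ are exactly $c=Nj$ with $j\ge 2$, and the corresponding Bessel argument is $\frac{4\pi\sqrt{mn}}{c}=\frac Xj\le\frac X2$. Setting $x_j:=\frac{X}{ja}$ we have $x_j\le x_2<\tfrac12+\tfrac12 a^{-2/3}$, which is $<\tfrac59$ once $a\ge 27$, so $0<x_j<1$ and Lemma~\ref{Bessel}(i) gives $J_a(ax_j)\le x_j^a\,J_a(a)\,e^{a(1-x_j)}$. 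Since $t\mapsto t^a e^{a(1-t)}$ is increasing on $(0,1)$ and $x_j\le x_2<\tfrac59$, this yields $J_a(ax_j)\le(\tfrac59)^a e^{4a/9}J_a(a)$ for the finitely many $j$ with $j\le 5$; for $j\ge 6$ the extra factor $j^{-a}$ in $x_j^a=(X/a)^a j^{-a}$ already makes $\sum_{j\ge 6}x_j^a e^{a(1-x_j)}$ a convergent tail that is negligible beside the $j=2$ term.

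Summing these and using the trivial bound $|S(m,n;Nj)|\le\varphi(Nj)\le Nj$ together with $J_a(a)\ll a^{-1/3}$ (Lemma~\ref{Bessel}(ii)),
\[
\Big|\,2\pi i^k\!\!\sum_{\substack{N\mid c\\ c>N}}\frac{S(m,n;c)}{c}\,J_{k-1}\!\Big(\tfrac{4\pi\sqrt{mn}}{c}\Big)\Big|\ \le\ 2\pi\sum_{j\ge 2}\big|J_{k-1}(X/j)\big|\ \ll\ \Big(\tfrac{5}{9}\Big)^{a}\, e^{4a/9}\,a^{-1/3}.
\]
Since $\tfrac49\le\tfrac59=1-\tfrac49$, we have $(\tfrac59)^a e^{4a/9}=e^{a(\frac49-\log\frac95)}\le e^{a(1-\frac49-\log\frac95)}$ (both exponents negative), so the remainder lies within the claimed error term and~(i) follows; the boundedly many small even $k$ are absorbed into the implied constant.

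For~(ii) we combine~(i) with Lemma~\ref{Bessel}(iii). The hypothesis $|X-a|<a^{1/3}$ says precisely that $X=a+d\,a^{1/3}$ with $|d|<1$, so $J_{k-1}(X)=J_a(a+d\,a^{1/3})\gg a^{-1/3}$ (and is in fact positive). Because $k$ is even, $i^k=\pm 1$, so the retained term $2\pi i^k\frac{S(m,n;N)}{N}J_{k-1}(X)$ is real with absolute value $\frac{2\pi}{N}\,|S(m,n;N)|\,J_{k-1}(X)$. As $N$ is fixed, $S(m,n;N)$ depends only on $m,n$ modulo $N$ and takes only finitely many values, so $|S(m,n;N)|\ge c_N>0$ whenever it is nonzero; hence this term has absolute value $\gg_N a^{-1/3}$, which dominates the exponentially small error from~(i) once $k$ is large. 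The triangle inequality then gives $|\Delta_{k,N}(m,n)-\delta(m,n)|\gg a^{-1/3}$, as asserted.

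The one genuinely delicate step is the tail estimate in~(i): one must check both that the Bessel arguments $X/j$ (for $j\ge 2$) remain a fixed proportion below $a$---this is exactly what forces the hypothesis ``$k$ large'' and what supplies the exponential gain from Lemma~\ref{Bessel}(i)---and that the ensuing series in $j$ converges and is controlled by its $j=2$ term. Everything else is bookkeeping with Lemmas~\ref{Bessel}(ii)--(iii).
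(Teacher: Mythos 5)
Your proof is correct and follows essentially the same route as the paper: isolate the $c=N$ term of Petersson's formula, kill the tail with the trivial bound $|S(m,n;Nj)|\le\varphi(Nj)$ together with Lemma~\ref{Bessel}(i)--(ii), and get the lower bound in (ii) from Lemma~\ref{Bessel}(iii). The only (harmless) divergence is technical: you control the tail via monotonicity of $t\mapsto t^{a}e^{a(1-t)}$ on $(0,1)$ plus geometric decay in $j$, where the paper splits at $b=5$ and runs an integral test; both land inside the same error term, and your explicit remark that a nonzero $S(m,n;N)$ is bounded below by a constant depending only on $N$ is a point the paper leaves implicit.
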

\begin{proof}
Let us consider Petersson's trace formula 
 \begin{align*}
      \Delta_{k,N}(m,n)&=\delta(m,n)+2\pi i^k \sum_{N|c,c>0} \frac{S(m,n,c)}{c}J_{k-1}\Big(\frac{4\pi\sqrt{mn}}{c}\Big)
      \\
       &=\delta(m,n)+2\pi i^k\frac{S(m,n,N)}{N}J_{k-1}\Big(\frac{4\pi\sqrt{mn}}{N}\Big) +2\pi i ^k \sum_{b\geq 2} \frac{S(m,n,bN)}{bN}J_{k-1}\Big(\frac{4\pi\sqrt{mn}}{bN}\Big).
    \end{align*}
On using a trivial bound $|S(m,n,bN)|\leq \phi(bN), $ we obtain 
\begin{equation}\label{j}
\left|\sum_{b\geq 2}\frac{S(m,n,bN)}{bN}J_{k-1}\Big(\frac{4\pi\sqrt{mn}}{bN}\Big)\right|    \leq \sum_{b\geq 2}\left| J_{k-1}\Big(\frac{4\pi\sqrt{mn}}{bN}\Big)\right|.
\end{equation}
Now $\left|\frac{4\pi\sqrt{mn}}{N}-(k-1)\right|<(k-1)^{\frac{1}{3}}$ implies  \begin{equation}\label{k}
1-\frac{1}{(k-1)^{\frac{2}{3}}}<\frac{4\pi\sqrt{mn}}{(k-1)N}<1+\frac{1}{(k-1)^{\frac{2}{3}}}.
\end{equation}
Using Lemma \ref{Bessel}(i) we get 
\begin{align*}
     \left| J_{k-1}\left(\frac{4\pi\sqrt{mn}}{bN}\right)\right|&=\left| J_{k-1}\left((k-1)\frac{4\pi\sqrt{mn}}{(k-1)bN}\right)\right|
      \leq e^{a(1-x)}x^aJ_a(a)
    \end{align*}
where $a=k-1$ and $x=\frac{4\pi\sqrt{mn}}{(k-1)bN}.$
For $k>27, $
Equation \eqref{k}  and $b\geq 2$ yields 
$$\frac{8}{9b}<\frac{4\pi\sqrt{mn}}{(k-1)bN}<\frac{10}{9b}$$
which consequently  implies
\begin{align*}
    e^{a(1-x)}x^a&=e^{(k-1)(1-x+\log x)} \leq e^{(k-1)\big( 1-\frac{8}{9b}+\log \frac{10}{9b}\big)}.
\end{align*}
 Using Lemma \ref{Bessel}(ii) and proceeding  from equation \eqref{j}, we get 
\begin{align*}
 \sum_{b\geq 2}\Bigg| J_{k-1}\Big(\frac{4\pi\sqrt{mn}}{bN}\Big)\Bigg|
 \ll \frac{e^{(k-1)}}{(k-1)^{\frac{1}{3} }}\sum_{b\geq 2}e^{-(k-1)\big(\frac{8}{9b}+\log \frac{9b}{10}\big)}
\end{align*}
\begin{equation}\label{break}
    =   \frac{e^{(k-1)}}{(k-1)^{\frac{1}{3} }}\Bigg(\sum_{b=2}^4 e^{-(k-1)\big(\frac{8}{9b}+\log \frac{9b}{10}\big)} \Bigg)
 +\frac{e^{(k-1)}}{(k-1)^{\frac{1}{3} }}\Bigg(\sum_{b\geq 5}e^{-(k-1)\big(\frac{8}{9b}+\log \frac{9b}{10}\big)} \Bigg).  
\end{equation}
Since \begin{equation}\label{breakon}
\frac{e^{(k-1)}}{(k-1)^{\frac{1}{3} }}\Bigg(\sum_{b=2}^4 e^{-(k-1)\big(\frac{8}{9b}+\log \frac{9b}{10}\big)} \Bigg)=\O\left(\frac{e^{(k-1)\left(1-\frac{4}{9}-\log(\frac{9}{5})\right)}}{(k-1)^{\frac{1}{3}}}\right),
\end{equation} 
 we  estimate   $$\Bigg(\sum_{b\geq 5}e^{-(k-1)\big(\frac{8}{9b}+\log \frac{9b}{10}\big)} \Bigg).$$ Let $f(x)=e^{-(k-1)\left(\frac{8}{9x}+\log \frac{9x}{10}\right)}$, then   $f$ is decreasing for $x>2.$  By integral test, 
 \begin{equation}\label{itest}
 \Bigg(\sum_{b\geq 5}e^{-(k-1)\big(\frac{8}{9b}+\log \frac{9b}{10}\big)} \Bigg)\leq \int_4^\infty e^{-(k-1)\big(\frac{8}{9x}+\log \frac{9x}{10}\big)} \,dx. \end{equation}
Let $y=\frac{8}{9x}$ in the above integral so that $dy=-\frac{8}{9x^2}dx$ and $$ e^{-(k-1)\big(\frac{8}{9x}+\log \frac{9x}{10}\big)}=\frac{e^{-(k-1)y}}{(\frac{9x}{10})^{k-3}}\cdot \frac{100}{81x^2}=\frac{e^{-(k-1)y}}{(\frac{4}{5y})^{k-3}}\cdot \frac{100}{81x^2} .$$
Hence 
\begin{align*}
    \int_4^\infty e^{-(k-1)\big(\frac{8}{9x}+\log \frac{9x}{10}\big)} \, dx &= \frac{25}{18} \int_0^{\frac{2}{9} } e^{-(k-1)y}                  \left(\frac{5y}{4}\right)^{k-3}    \, dy 
    \\
    & \leq \frac{25}{18} \int_0^{\frac{2}{9} }  \left(\frac{5}{18} \right)^{k-3} e^{-(k-1)y}  \,dy 
    = \frac{25}{18}\left(\frac{5}{18} \right)^{k-3}\Bigg( \frac{e^{\frac{-2(k-1)} {9}} }{-(k-1)}+\frac{1}{(k-1)}\Bigg). 
\end{align*}
Using the above bound  for  equation \eqref{itest} with keeping equation \eqref{j} ,\eqref{break} and \eqref{breakon} in mind,  we have
$$\sum_{b\geq 2}\Bigg| J_{k-1}\Big(\frac{4\pi\sqrt{mn}}{bN}\Big)\Bigg|\ll  e^{(k-1)(1-\frac{4}{9}-\log(\frac{9}{5}))} + \frac{e^{(k-1)}}{(k-1)^{\frac{1}{3} }} \left(\frac{5}{18} \right)^{k-3}\left( \frac{e^{\frac{-2(k-1)}{9}} }{-(k-1)}+\frac{1}{(k-1)}\right).
$$
However,  $$
e^{(k-1)}\Big(\frac{5}{18} \Big)^{k-3}\Bigg( \frac{e^{\frac{-2(k-1)}{9}} }{-(k-1)}+\frac{1}{(k-1)}\Bigg)\leq \frac{2e^{2}}{(k-1)}\Big(\frac{5e}{18} \Big)^{k-3}=\o\left(e^{(k-1)\left(1-\frac{4}{9}-\log(\frac{9}{5})\right)}\right),$$
which implies 
\begin{equation}\label{finalineq}
\sum_{b\geq 2}\Bigg| J_{k-1}\Big(\frac{4\pi\sqrt{mn}}{bN}\Big)\Bigg|=
\O\left(\frac{e^{(k-1)\left(1-\frac{4}{9}-\log(\frac{9}{5})\right)}}{(k-1)^{\frac{1}{3}}}\right).
\end{equation}
On considering  equation \eqref{finalineq} and  equation \eqref{j}, 
$$ \hspace{2mm}\Delta_{k,N}(m,n)=\delta(m,n)+2\pi i^k \frac{S(m,n,N)}{N}J_{k-1}\left(\frac{4\pi\sqrt{mn}}{N} \right)+\O\left(\frac{e^{(k-1)\left(1-\frac{4}{9}-\log(\frac{9}{5})\right)}}{(k-1)^{\frac{1}{3}}}\right).
$$

 In order to show part(ii), let $\frac{4\pi\sqrt{mn}}{N}=(k-1)+d(k-1)^{\frac{1}{3}}$ with  $|d|<1$. On  applying Lemma \ref{Bessel} (iii) we have $$J_{k-1}\Big(\frac{4\pi\sqrt{mn}}{N} \Big)\gg \frac{1}{(k-1)^{\frac{1}{3}}}.$$
 Thus 
 \begin{align*}
     & \Delta_{k,N}(m,n)-\delta(m,n) =2\pi i^k \frac{S(m,n,N)}{N}J_{k-1}\left(\frac{4\pi\sqrt{mn}}{N} \right) +\O\left(\frac{e^{(k-1)\left(1-\frac{4}{9}-\log(\frac{9}{5})\right)}}{(k-1)^{\frac{1}{3}}}\right) \\
    & = 2\pi i^k \frac{S(m,n,N)}{N}J_{k-1}\left(\frac{4\pi\sqrt{mn}}{N} \right) + \o \left((k-1)^{-\frac{1}{3}}\right)   \gg (k-1)^{-\frac{1}{3}}.
 \end{align*}
\end{proof}
\begin{remark}[generalising Theorem \ref{asymptote}]
    Note that since $\left(1-\frac{4}{9}-\log(\frac{9}{5})\right)<-0.03$, we have $e^{(k-1)\left(1-\frac{4}{9}-\log(\frac{9}{5})\right)}=\o \left((k-1)^{-\frac{1}{3}}\right).$   When $S(m,n,N)= 0,$ we have $$\Delta_{k,N}(m,n)=\delta(m,n)+\O \left(\frac{e^{(k-1)\left(1-\frac{4}{9}-\log(\frac{9}{5})\right)}}{(k-1)^{\frac{1}{3}}}\right).$$
On modifying the proof of Theorem \ref{asymptote} a bit, we can get a slightly better bound of $\O \left(\frac{e^{\left(\frac{1}{2}-\log 2\right)(k-1)}}{(k-1)^{\frac{1}{3}}}\right).$ On considering the  equation \eqref{k},  as $k\rightarrow \infty,$ we can let  $1-\epsilon<\frac{4\pi\sqrt{mn}}{(k-1)N}<1+\epsilon$ in such a way that $\epsilon\rightarrow 0.$ If we observe the proof carefully and work with this $\epsilon$, the $\O $-term turns out to be $$\frac{e^{\left(1-\frac{(1-\epsilon)}{2}+\log \left(\frac{(1+\epsilon)}{2}\right) \right) (k-1)}}{(k-1)^{\frac{1}{3}}}.$$ The error term in the above theorem has been worked out with $\epsilon=\frac{1}{9}.$ However, there is a limit to which we can optimize the big $\O $ estimate by this method. The optimal upper bound thus possible by the above proof  is  $\O \left(\frac{e^{(k-1)\left(\frac{1}{2}-\log 2\right)}}{(k-1)^{\frac{1}{3}}}\right)$ which is because  $$\lim_{\epsilon\rightarrow 0^+}1-\frac{(1-\epsilon)}{2}+\log \left(\frac{(1+\epsilon)}{2}\right)=\frac{1}{2}-\log 2.$$
\end{remark}
\begin{remark}
 For $N=1$,  Theorem \ref{asymptote} gives a better error term as compared to $\O\left(k^{-\frac{1}{2}}\right)$ of  \cite[Theorem 1.7]{JS}. More precisely, 
$$
\Delta_{k,1}(m,n)=\delta(m,n)+2\pi i^k J_{k-1}\Big(4\pi\sqrt{mn}\Big)+\O\left(\frac{e^{(k-1)(1-\frac{4}{9}-\log(\frac{9}{5}))}}{(k-1)^{\frac{1}{3}}}\right).
$$
 Theorem \ref{asymptote} also expands on the remark of  \cite[Theorem 1.7]{JS}. For a fixed level $N,$ corollary 2.3 of \cite{ILS} gives an asymptotic for $\Delta_{k,N}(m,n)$ with the condition   $\frac{4\pi\sqrt{mn}}{N}\leq \frac{k}{3},$ whereas the above theorem gives asymptotic for  $(k-1)-(k-1)^{\frac{1}{3}}<\frac{4\pi\sqrt{mn}}{N}< (k-1)+(k-1)^{\frac{1}{3}} $ for a fixed level $N$. Whenever $\left|\frac{4\pi\sqrt{mn}}{N}-(k-1)\right|<(k-1)^{\frac{1}{3}}$ and  $S(m,n,N)\ne 0$, we have   $$\big| \Delta_{k,N}(m,n)-\delta(m,n)\big|\gg  (k-1)^{-\frac{1}{3}}. $$ 
\end{remark}
\begin{remark}[comparing Theorem \ref{asymptote} with Theorem 1.7 of \cite{JS}]
    In this remark, we compare two asymptotic at hand. As proved in \cite[Theorem 1.7]{JS}, for the condition $|4\pi\sqrt{mn}-k| <2k^{\frac{1}{3}}$ and $\gcd(mn,N)=1,$ the main term for $\Delta^*_{k,N}(m,n)$ is $2\pi i^{-k}\frac{\mu(N)}{N} \prod_{p\mid N}  (1-p^{-2})J_{k-1}(4\pi\sqrt{mn})$ for squarefree level $N.$ Theorem \ref{asymptote} gives us an asymptotic for $\Delta_{k,N}(m,n)$ with the condition  
$\left|\frac{4\pi\sqrt{mn}}{N}-(k-1)\right|<(k-1)^{\frac{1}{3}}$ for any level $N,$ provided   $S(m,n,N)\ne 0.$  For a purpose of application  to Theorem \ref{asymptote}, we show the non-vanishing of the Kloosterman sum $S(1,p^{2n},N)$  in Lemma \ref{main lemma}  for any natural number $n$ and level $N$ not divisible by $8.$
    
\end{remark}
\section{Non-vanishing of certain classical Kloosterman sums}\label{NVKlsum}
Recall that the Kloosterman sum $S(m,n,c)$ is given by $$S(m,n,c)=\sum_{x (\text{mod} \hspace{1 mm} c),\, \gcd(x,c)=1} e^{2\pi i\left(\frac{mx+n\overline{x}(c)}{c}\right)},$$   
where   $m,n$ are integers, $c$ is a natural number and $\overline{x}(c)$ denotes the multiplicative inverse of $x$   modulo $c$. 
In 1926,  Kloosterman sums were introduced as an application to solve the following problem. Given natural numbers $a,b,c,d$ and $n$, Kloosterman obtained an asymptotic formula in \cite{Kloosterman} for the number of representations of a natural number $n$
in the form $ax^2+by^2+cz^2+ dt^2.$  The sums for the special case of  $m=0 $ or $n=0 $  are called Ramanujan sums.  For squarefree $N$, the main term in \cite[Theorem 1.7]{JS} is $$2\pi i^{-k}\frac{\mu(N)}{N} \prod_{p\mid N}  (1-p^{-2})J_{k-1}(4\pi\sqrt{mn}). $$ Note that $\mu(N)=S(1,0,N)$ (see equation (3.4) of \cite{IK}) which is nonzero if and only if $N$ is squarefree. Thus in order to have a Kloosterman sum as the main term, we need to ensure the nonvanishing of that particular Kloosterman sum (see Theorem \ref{asymptote}(ii)).  

Throughout this section, we assume $N$ to be  coprime to prime $p.$ 
For $a,b\in \mathbb{N} , $ $(a,b)=1, $ let $ \overline{a}(b)$ denote the multiplicative inverse of $a$ in $(\mathbb{Z}/ b\mathbb{Z})^*$ i.e.  $a \overline{a}(b)\equiv 1$ (mod $b$). 
Now we demonstrate a lemma regarding the multiplicative property of Kloosterman sums. The lemma is particularly useful to show the non-vanishing of a Kloosterman sum by reducing the task to show the nonvanishing for prime powers. 
\begin{lemma}\label{multiplicativity}
Let $N=q_1^{\beta_1}  \,\dots\,  q_t^{\beta_t},$
where $q_1,  \,\dots\, , q_t$ are distinct prime numbers,  $q_i^{\beta_i}=l_i$ and   $$c_i=\frac{N}{\prod_{j=1}^i l_j}$$  for $1\leq i\leq t.$ 
Further let $m_0=1,$ $m_i=\prod_{j=1}^i\overline{l_j}({c_{j}})$ for $1\leq i\leq t-1.$ 
For a positive integer $n,$ we have 
\begin{equation}\label{prodKlo}
S(1,p^{2n},N)=\prod_{i=1}^t S(m_{i-1}\overline{c_i}({l_i}), m_{i-1}\overline{c_i}({l_i})p^{2n},l_i).
\end{equation}
\end{lemma}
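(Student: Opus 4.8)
The plan is to establish the factorization in \eqref{prodKlo} by iterating the classical twisted multiplicativity relation for Kloosterman sums: if $(c,c')=1$, then
$$S(m,n,cc') = S(m\,\overline{c'}(c),\, n\,\overline{c'}(c),\, c)\cdot S(m\,\overline{c}(c'),\, n\,\overline{c}(c'),\, c'),$$
which follows from the Chinese Remainder Theorem applied to the summation variable $x \pmod{cc'}$ (writing $x \equiv c'\overline{c'}(c)x_1 + c\overline{c}(c')x_2$ and observing that the inverse $\overline{x}(cc')$ splits accordingly). This is the standard identity; I will quote it or reprove it in one line. Note the hypothesis $(p,N)=1$ from the start of the section guarantees all the inverses appearing below are well defined.

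The iteration proceeds as follows. Write $N = l_1 c_1$ with $l_1 = q_1^{\beta_1}$ and $c_1 = N/l_1$, which are coprime. Applying the twisted multiplicativity with $(m,n)=(1,p^{2n})$ gives
$$S(1,p^{2n},N) = S(\overline{c_1}(l_1),\, \overline{c_1}(l_1)p^{2n},\, l_1)\cdot S(\overline{l_1}(c_1),\, \overline{l_1}(c_1)p^{2n},\, c_1).$$
The first factor is exactly the $i=1$ term of the product (since $m_0 = 1$). For the second factor, write $c_1 = l_2 c_2$ (coprime) and apply the identity again to $S(m_1', m_1'p^{2n}, c_1)$ where $m_1' = \overline{l_1}(c_1)$; the key point is that $\overline{l_1}(c_1) \equiv \overline{l_1}(c_2) \pmod{c_2}$ and $\equiv \overline{l_1}(l_2)\pmod{l_2}$, so after multiplying by $\overline{c_2}(l_2)$ resp. $\overline{l_2}(c_2)$ the arguments organize into $m_1\overline{c_2}(l_2)$ (matching the $i=2$ term, with $m_1 = \overline{l_1}(c_1)$, consistent with the definition $m_i = \prod_{j=1}^i \overline{l_j}(c_j)$ once one checks $\overline{l_1}(c_1)$ and $\overline{l_1}(c_2)$ agree mod $c_2$) and a leftover $\overline{l_1 l_2}(c_2) = m_2$ on the tail $S(\cdot,\cdot,c_2)$. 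Repeating $t$ times peels off one prime-power factor $l_i$ at each stage and leaves, after step $i$, a tail of the form $S(m_i, m_i p^{2n}, c_i)$; since $c_t = 1$ the tail eventually disappears, yielding \eqref{prodKlo}. I would present this as a clean induction on $t$: the inductive hypothesis is the factorization for $S(m, mp^{2n}, c)$ for any $c \mid N$ built from a subset of the $q_i$ and any unit $m$, with the $m_i$ tracked via the recursion $m_i = m_{i-1}\overline{l_i}(c_i)$.

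The main obstacle is purely bookkeeping: verifying that the unit multipliers produced by successive applications of twisted multiplicativity collapse to precisely $m_{i-1}\overline{c_i}(l_i)$ modulo $l_i$, and that the residue of $m_{i-1}$ modulo $c_i$ needed at the next stage is compatible with the product formula $m_i = \prod_{j\le i}\overline{l_j}(c_j)$. This rests on the elementary congruence $\overline{l_j}(c_{j-1}) \equiv \overline{l_j}(c_j) \pmod{c_j}$ (valid because $c_j \mid c_{j-1}$) together with multiplicativity of inversion, $\overline{ab}(c) \equiv \overline{a}(c)\,\overline{b}(c) \pmod c$. Once these two facts are in hand the induction is routine, so I would state them explicitly at the outset and then run the induction, checking the base case $t=1$ directly from the twisted multiplicativity identity above.
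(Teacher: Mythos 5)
Your proposal is correct and rests on exactly the same ingredient as the paper's proof, namely iterating the twisted multiplicativity identity $S(a,b,cd)=S(a\overline{c}(d),b\overline{c}(d),d)\,S(a\overline{d}(c),b\overline{d}(c),c)$ from \cite[Eq.~(1.59)]{IK} with the same bookkeeping of the unit multipliers $m_i$ via $m_i=m_{i-1}\overline{l_i}(c_i)$. The only cosmetic difference is direction: you peel off $l_1,l_2,\dots$ successively from $S(1,p^{2n},N)$, whereas the paper starts from the right-hand product and merges adjacent factors from the tail inward until it collapses to $S(1,p^{2n},N)$.
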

\begin{proof}
The main idea of the proof is to 
 use the multiplicative of the Kloosterman sum (see equation (1.59) of \cite{IK}) repetitively. The property is given by 
\begin{equation}\label{mul.prop.}
S(a,b,cd)=S(a\overline{c}(d),b\overline{c}(d),d)S(a\overline{d}(c),b\overline{d}(c),c)
\end{equation}
where $(c,d)=1$.
Let us consider the last two terms in the product given by equation \eqref{prodKlo} where the index $i$ is written in increasing order,i.e.,  
\begin{align*}
&S(m_{t-1},m_{t-1}p^{2n},l_t)S(m_{t-2}\overline{c_{t-1}}({l_{t-1}}),m_{t-2}\overline{c_{t-1}}({l_{t-1}})p^{2n},l_{t-1}) \\
=&S(m_{t-2}\overline{l_{t-1}}({c_{t-1}}),m_{t-2}\overline{l_{t-1}}({c_{t-1}})p^{2n},l_t)S(m_{t-2}\overline{c_{t-1}}({l_{t-1}}),m_{t-2}\overline{c_{t-1}}({l_{t-1}})p^{2n},l_{t-1}).
\end{align*}
 By equation \eqref{mul.prop.}, the above product is equal to $$S(m_{t-2},m_{t-2}p^{2n},l_{t-1}l_t)$$ since $\overline{c_{t-1}}({l_{t-1}})l_t=\overline{l_t}({l_{t-1}})l_t=1$ (mod $l_{t-1}$).  In a  similar fashion, we note that 
 \begin{align*}
    & S(m_{t-i},m_{t-i}p^{2n},l_{t-i+1}\, ...  \,l_t)S(m_{t-i-1}\overline{c_{t-i}}({l_{t-i}}),m_{t-i-1}\overline{c_{t-i}}({l_{t-i}})p^{2n},l_{t-i})\\
     =&S(m_{t-i-1},m_{t-i-1}p^{2n},l_{t-i} l_{t-i+1}\, ...     \,l_t)
 \end{align*}
 for $2\leq i\leq t.$
Above equality is valid because  $m_{t-i}=m_{t-i-1}\overline{l_{t-i}}({c_{t-i}})$ and 
 \begin{align*}
    &\overline{c_{t-i}}({l_{t-i}})l_{t-i+1}\,... \,l_t=\overline{\Bigg(\frac{N}{l_1 \,... \,  l_{t-i}}\Bigg)}({l_{t-i}})  l_{t-i+1}\,... \,l_t \\
    =&\overline{l_{t-i+1}\,... \,l_t}({l_{t-i}})  l_{t-i+1}\,... \,l_t=1
 \end{align*}
 modulo $l_{t-i}$.
 This proves 
$$S(1,p^{2n},N)=\prod_{i=1}^t S(m_{i-1}\overline{c_i}({l_i}), m_{i-1}\overline{c_i}({l_i})p^{2n},l_i).$$
\end{proof}
\begin{remark}
A similar result like Lemma \ref{multiplicativity} for $S(a,b,N)$ with $a,b\in \N$
can be derived as well. However, keeping equation \eqref{lb} in mind, we consider only  $S(1,p^{2n}, N).$ 
\end{remark}
In the next three lemmas, we show the non-vanishing of Kloosterman sum $S(1,p^{2n},N)$ for some specific values of $N$ leading to  $N$ not divisible by $8.$  This is crucial so that we can apply Theorem \ref{asymptote}.
\begin{lemma}\label{odd prime power}
Let $N=q_1^{\beta_1}  \,\dots\,  q_t^{\beta_t},$
where $q_1,  \,\dots\, , q_t$ are distinct odd prime numbers.
Let $\beta_1,\,...\, ,\beta_t$ be such that $\beta_i\geq 2.$   Then $S(1,p^{2n},N)\neq 0$ for all $n\in \mathbb{N}.$
\end{lemma}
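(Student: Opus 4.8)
The plan is to use Lemma \ref{multiplicativity} to reduce the non-vanishing of $S(1,p^{2n},N)$ to the non-vanishing of each factor $S(a_i,a_i p^{2n},l_i)$ with $l_i = q_i^{\beta_i}$ a prime power of an odd prime, where $a_i = m_{i-1}\overline{c_i}(l_i)$ is a unit modulo $l_i$. Since a product of nonzero complex numbers is nonzero, it suffices to show: for an odd prime $q$, an exponent $\beta \geq 2$, a unit $a \in (\mathbb{Z}/q^\beta\mathbb{Z})^*$, and any $n \in \mathbb{N}$ with $(p,q)=1$, we have $S(a, a p^{2n}, q^\beta) \neq 0$. By the substitution $x \mapsto ax$ (or rather pulling out $a$), one checks $S(a, a p^{2n}, q^\beta) = S(1, p^{2n}, q^\beta)$ up to a unit twist that does not affect vanishing, so the real task is to evaluate $S(1, u, q^\beta)$ for $u$ a square unit when $\beta \geq 2$.

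The key step is the classical explicit evaluation of Kloosterman sums to prime-power moduli with $\beta \geq 2$ (Salié-type / stationary phase evaluation; see e.g.\ \cite{IK}, equation (12.39) or the Salié sum formula). Concretely, for $\beta \geq 2$ and $q$ odd, writing $S(1,u,q^\beta)$, the sum localizes on solutions $y$ of $y^2 \equiv u \pmod{q^{\lceil \beta/2 \rceil}}$: if $u$ is a quadratic residue mod the relevant power, one gets
\begin{equation*}
S(1,u,q^\beta) = q^{\beta/2} \sum_{y^2 \equiv u} \varepsilon_y \, e\!\left(\frac{2y}{q^\beta}\right)
\end{equation*}
for suitable roots of unity $\varepsilon_y$, and if $u$ is a non-residue the sum vanishes. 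Since here $u = p^{2n}$ is a perfect square, it is automatically a quadratic residue modulo every power of $q$, so the sum is in the non-vanishing regime; one then needs to verify that the resulting short character sum over the (one or two) square roots $y \equiv \pm p^n$ does not cancel. For $\beta \geq 2$ there are exactly two roots $y = \pm p^n \bmod q^{\beta}$ (for $q$ odd), and the two corresponding terms are complex conjugates up to the sign $\varepsilon_y$, so their sum is $2\,\mathrm{Re}$ or $2i\,\mathrm{Im}$ of a single term times a nonzero constant — in either case nonzero because $e(2p^n/q^\beta) \neq \pm 1$ when $q^\beta \nmid 4p^n$, which holds since $q$ is odd and $q \nmid p$. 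I would carry this out either by citing the Salié evaluation directly from \cite{IK} and specializing, or by an elementary Hensel-lifting argument: parametrize $x = x_0(1 + q t)$ around a critical point, expand the exponential, and sum the resulting geometric/Gauss sum in $t$.

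The main obstacle I anticipate is handling the case $\beta = 2$ (and the interface between even and odd $\beta$) cleanly, since the stationary-phase evaluation has slightly different shapes depending on the parity of $\beta$, and one must make sure the twisting unit $a$ coming from Lemma \ref{multiplicativity} is genuinely irrelevant — i.e.\ that $S(a, b, q^\beta)$ with $b = a p^{2n}$ really reduces to the square-argument case. Writing $S(a, a p^{2n}, q^\beta) = \sum_x e((ax + a p^{2n}\overline{x})/q^\beta)$ and substituting $x \mapsto \overline{a} x$ gives $\sum_x e((x + a^2 p^{2n} \overline{x})/q^\beta) = S(1, (a p^n)^2, q^\beta)$, so the argument is still a perfect square unit; this confirms the reduction but should be spelled out. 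A secondary bookkeeping point is that $n$ and the signs $\varepsilon_y = \left(\tfrac{y}{q}\right)$ (or a quartic analogue) enter, but since we only need \emph{non}-vanishing and not an exact formula, the conjugate-pair argument above suffices and we never need to track the $\varepsilon_y$ precisely. With the prime-power case in hand, Lemma \ref{multiplicativity} immediately yields $S(1,p^{2n},N) \neq 0$ for all $n$, completing the proof.
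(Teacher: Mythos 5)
Your proposal follows essentially the same route as the paper: reduce to odd prime-power moduli via Lemma \ref{multiplicativity}, invoke the Salié evaluation for $\beta\ge 2$ (the paper cites Exercise 1 of Chapter 12 of \cite{IK}), observe that the argument $(ap^n)^2$ is a perfect square unit so the sum equals $2\bigl(\tfrac{l'}{q^\beta}\bigr)q^{\beta/2}\,\Re\bigl(\varepsilon\, e(2l'/q^\beta)\bigr)$ with $l'\equiv\pm ap^n$, and check that this cannot vanish because $q$ is odd and coprime to $p$. The one imprecision is your closing justification ``nonzero because $e(2p^n/q^\beta)\neq\pm1$'': for the $2\,\Re$ branch one must also rule out $e(2l'/q^\beta)=\pm i$ (the paper does this by showing $l'\notin\{\alpha q^\beta/8:\alpha\in\mathbb{Z}\}$), but this follows from the same underlying fact that $l'$ is a unit modulo the odd modulus $q^\beta$, so the gap is cosmetic.
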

\begin{proof}
By virtue of Lemma \ref{multiplicativity}, it is enough to show that for all $i,$  $$S(m_{i-1}\overline{c_i}({l_i}), m_{i-1}\overline{c_i}({l_i})p^{2n},l_i) \neq 0,$$ where 
$l_i=q_i^{\beta_i}$.
Let $i$ be fixed. 
Firstly, we show that $q_i$ does not divide $2(m_{i-1}\overline{c_i}({l_i}))^2p^{2n}$ using method of contradiction. Suppose  
$q_i|2(m_{i-1}\overline{c_i}({l_i}))^2p^{2n},$  so that we get   $q_i$ divides $(m_{i-1}\overline{c_i}({l_i}))^2.$ This implies 
$q_i|m_{i-1}^2,$ since $q_i|(\overline{c_i}({l_i}))^2$ would imply $l_i$ should divide $c_i^{2\beta_i}(\overline{c_i}({l_i}))^{2\beta_i}=1$ which is an impossibility. 
Now $q_i$ being a prime number must divide $m_{i-1}$
 where $m_{i-1}=\prod_{j=1}^{i-1}\overline{l_j}({c_{j}})$.   Let the  equation $(\tilde{j})$ for this particular proof be given by $l_j\overline{l_j}({c_j})\equiv 1$ (mod $c_j).$ Further let equation  $(j)'$ for this particular proof be 
$$
l_j\overline{l_j}({c_j})l_2 \, \dots\,   l_j\equiv l_2 \, \dots\,    l_j \, (\textrm{mod} \,\,  c_1).
$$
Considering the equations $(\tilde{1}),(2)', \, \dots\,   , (i-1)'$, we get 
$$
l_1\, \dots\,  l_{i-1} \overline{l_1}({c_1})\, \dots\,   \overline{l_{i-1}}({c_{i-1}}) l_2^{i-2} l_3^{i-3}\, \dots\,   l_{i-1}\equiv
l_2^{i-2} l_3^{i-3}\, \dots\,   l_{i-1}\, ( \textrm{mod} \,\,  c_1).
$$
Since $q_i|c_1$ for $i>1,$ $q_i$ divides $$\left(l_1\, \dots\,  l_{i-1} \overline{l_1}({c_1})\, \dots\,   \overline{l_{i-1}}({c_{i-1}})l_2^{i-2} l_3^{i-3}\, \dots\,   l_{i-1}\right)-\left(
l_2^{i-2} l_3^{i-3}\, \dots \,    l_{i-1}\right).$$
However by our supposition of $q_i|m_{i-1},$  we obtain  $q_i|l_2^{i-2} l_3^{i-3}\, \dots \,   \, l_{i-1}$, which is absurdity. 

 We have shown that $S(m_{i-1}\overline{c_i}({l_i}), m_{i-1}\overline{c_i}({l_i})p^{2n},l_i)$ satisfy the required hypothesis for exercise 1 of  \cite[Chapter 12]{IK}. On letting 
$\Re(z)$ denote the real part of the complex number $z$, we have  
\begin{equation}\label{klformula}
S(m_{i-1}\overline{c_i}({l_i}), m_{i-1}\overline{c_i}({l_i})p^{2n},l_i)=2\Bigg( \frac{l'}{l_i}\Bigg)\sqrt{l_i} \Re \Big(\epsilon_{l_i} e^{\frac{4\pi i l'}{l_i}} \Big)
\end{equation}
where $l'^2\equiv (m_{i-1}\overline{c_i}({l_i})p^n)^2$ (mod $l_i$), $\epsilon_{l_i}=1$ or $i$ as per $l_i\equiv 1 $ or $l_i\equiv 3$ (mod $4$) respectively (see \cite[equation (2.2)]{IK}). We now claim that $l'\notin \{\frac{\alpha l_i}{8}\, |\,  \alpha \in \mathbb{Z}\}$ using method of contradiction again. Suppose not, then $l'=\frac{\alpha l_i}{8}$ for some $\alpha\in \mathbb{Z}.$ The congruence satisfied by $l'$ implies  
$$\alpha^2l_i^2-64l_it=64(m_{i-1}\overline{c_i}({l_i})p^n)^2$$
for some $ t\in \mathbb{Z}.$
 Hence $q_i$ must divide $64(m_{i-1}\overline{c_i}({l_i})p^n)^2,$ an absurdity as shown earlier in this proof.  
 Thus  $S(1,p^{2n},N)\neq 0$.
\end{proof}
One of the key steps in the above lemma is the closed formula given by equation \eqref{klformula}. The Kloosterman sum $S(a,b,l_i)$ to modulus $l_i=q_i^{\beta_i}$ with $\beta_i\geq 2$ were first computed by Salie\cite{SAL}. We note that a closed formula for prime modulus $q$, i.e. for $S(a,b,q)$ is not known. Hence for nonvanishing for prime modulus, we proceed as given by Lemma \ref{odd prime}. 
\begin{lemma}\label{odd prime}
Let $q$ be an odd prime number. Then $S(1,a,q)\neq 0$ for $a\in \mathbb{Z}.$
\end{lemma}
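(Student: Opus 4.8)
The plan is to reduce the statement to two exact character–sum evaluations by means of a Galois–conjugacy argument. The case $a\equiv 0\pmod q$ is immediate, since $S(1,0,q)$ is the Ramanujan sum $\sum_{\gcd(x,q)=1}e(x/q)=-1$. So I would assume $\gcd(a,q)=1$ and argue by contradiction, supposing $S(1,a,q)=0$.

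First I would record the Galois action on the cyclotomic field $\Q(\zeta_q)$, $\zeta_q=e(1/q)$. Since $S(1,a,q)\in\Z[\zeta_q]$, applying $\sigma_t\colon \zeta_q\mapsto\zeta_q^{\,t}$ for $t\in(\Z/q\Z)^*$ and then the substitution $x\mapsto\overline{t}x$ gives
\[
\sigma_t\big(S(1,a,q)\big)=\sum_{x}e\!\left(\frac{t(x+a\overline{x})}{q}\right)=\sum_{x}e\!\left(\frac{x+t^2a\,\overline{x}}{q}\right)=S(1,t^2a,q),
\]
so the full set of Galois conjugates of $S(1,a,q)$ is $\{S(1,ra,q):r\text{ a quadratic residue mod }q\}$. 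Consequently, if $S(1,a,q)=0$ then every conjugate vanishes, i.e.\ $S(1,b,q)=0$ for every $b$ in the quadratic-residue coset of $a$: every quadratic residue $b$ if $\Leg{a}{q}=1$, and every non-residue $b$ if $\Leg{a}{q}=-1$.

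Next I would compute the two twisted sums $\sum_{a\bmod q}S(1,a,q)$ and $\sum_{a\bmod q}\Leg{a}{q}S(1,a,q)$ by unfolding the definition and invoking orthogonality of additive characters. The untwisted sum collapses to $0$ (the inner sum over $a$ of $e(a\overline{x}/q)$ vanishes since $\overline{x}\not\equiv 0$), which gives $\sum_{a=1}^{q-1}S(1,a,q)=-S(1,0,q)=1$. The Legendre-twisted sum unfolds to $g^2$, where $g=\sum_{x\bmod q}\Leg{x}{q}e(x/q)$ is the quadratic Gauss sum, so it equals $\Leg{-1}{q}q$. Solving the resulting $2\times 2$ linear system, the sum of $S(1,a,q)$ over the quadratic residues equals $\tfrac12\big(1+\Leg{-1}{q}q\big)$ and over the non-residues equals $\tfrac12\big(1-\Leg{-1}{q}q\big)$, and both are nonzero because $q\ge 3$.

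Combining the two steps yields the contradiction: the vanishing of a single $S(1,a,q)$ with $\gcd(a,q)=1$ would force one of these two coset-sums to be $0$. Hence $S(1,a,q)\neq 0$ for all $a\in\Z$. The only genuinely non-formal input is the classical identity $g^2=\Leg{-1}{q}q\neq 0$; I expect the conceptual crux to be the Galois step, which is exactly what allows two scalar identities to control all $q-1$ of the individual sums $S(1,a,q)$ at once, rather than any computational difficulty.
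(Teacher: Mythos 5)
Your proof is correct, but it takes a genuinely different route from the paper's. The paper argues locally: it uses the isomorphism $\mathbb{Z}[e^{2\pi i/q}]\cong\mathbb{Z}[x]/(\Phi_q(x))$ and the ring homomorphism onto $\F_q$ sending $e^{2\pi i/q}\mapsto 1$; under this map the $(q-1)$-term sum $S(1,a,q)$ goes to $q-1\equiv -1\not\equiv 0$ in $\F_q$, so it cannot vanish. That is a one-step congruence obstruction (essentially $S(1,a,q)\equiv -1$ modulo the prime above $q$), valid uniformly in $a$ with no character-sum evaluation needed. Your argument is global: the Galois computation $\sigma_t(S(1,a,q))=S(1,t^2a,q)$ (which is right, and is the standard orbit structure of Kloosterman sums under $\Gal(\Q(\zeta_q)/\Q)$) reduces vanishing of one sum to vanishing of an entire quadratic-residue coset, and the two moment identities $\sum_{a\ne 0}S(1,a,q)=1$ and $\sum_a\Leg{a}{q}S(1,a,q)=g^2=\Leg{-1}{q}q$ then pin down the two coset sums as $\tfrac12\bigl(1\pm\Leg{-1}{q}q\bigr)\ne 0$; all of these computations check out. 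The trade-off: the paper's proof is shorter and more elementary (no Gauss sums), while yours yields extra information as a byproduct, namely the exact values of the sums of $S(1,a,q)$ over residues and over non-residues. Both proofs rest on the same underlying fact that $S(1,a,q)$ is an algebraic integer in $\Q(\zeta_q)$, exploited through a prime ideal in one case and through the Galois action in the other.
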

\begin{proof}
Let $\Phi_q(x)=1+x+x^2+\dots+x^{q-1}$
so that
$\mathbb{Z}[x]/(\Phi_q(x),q)\cong \mathbb{Z}[x]/\left((x-1)^q,q\right)$ (see exercise 8 of  \cite[Section 13.6]{DF}).
Further, $ \mathbb{Z}\Big[e^{\frac{2\pi i}{q}}\Big]$ is isomorphic to $\mathbb{Z}[x]/(\Phi_q(x))$ where  $e^{\frac{2\pi i}{q}}$ gets mapped to $x+(\Phi_q(x))$.  Consider the ring homomorphism  $$
\mathbb{Z}[x]/(\Phi_q(x))\rightarrow \mathbb{Z}[x]/\left((x-1)^q,q\right) \rightarrow \mathbb{Z}[x]/\left((x-1),q\right)\rightarrow \mathbb{F}_q
$$
where $$x+(\Phi_q(x))\mapsto x+\left((x-1)^q,q\right) \mapsto x+((x-1),q)$$ $$=1+\left((x-1),q\right)\mapsto 1.$$
Hence $e^{\frac{2\pi i}{q}}$ gets mapped to 1 via the ring homomorphism. However $$S(1,a,q)=\sum_{x=1}^{q-1}\left(e^{\frac{2\pi i }{q}}\right)^{(x+a\overline{x}(q))}$$ implies $S(1,a,q)$ gets mapped to $q-1$ in $\mathbb{F}_q.$ This infact shows that $S(1,a,q)$ can not be $0$. This is because on supposing  $S(1,a,q)=0$  makes the image of $S(1,a,q)$ under the ring homomorphism  to be $0$ in $\mathbb{F}_q.$
\end{proof}
\begin{lemma}\label{main lemma}
Let $N=2^a b$ with $a=0,1,2$ and $b$ be an odd number. Then $S(1,p^{2n},N)\neq 0$ for all $n\in \mathbb{N}.$
\end{lemma}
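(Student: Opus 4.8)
The plan is to reduce to prime-power moduli via Lemma \ref{multiplicativity} and then dispose of the odd prime powers using Lemmas \ref{odd prime power} and \ref{odd prime}, leaving only the contribution of the prime $2$, which is the one new ingredient. Write $N=2^{a}q_{1}^{\beta_{1}}\cdots q_{s}^{\beta_{s}}$ with $q_{1},\dots,q_{s}$ distinct odd primes. Applying Lemma \ref{multiplicativity} to the \emph{full} factorization of $N$ (so that, when $a\ge 1$, the prime power $2^{a}$ occurs as one of the $l_{i}$), we get
$$S(1,p^{2n},N)=\prod_{i}S\bigl(a_{i},\,a_{i}p^{2n},\,l_{i}\bigr),$$
where $l_{i}$ ranges over the prime powers exactly dividing $N$ and each coefficient $a_{i}$ (an explicit product of inverse residues $m_{i-1}\overline{c_{i}}(l_{i})$) is coprime to $l_{i}$. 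Since $a_{i}$ is a unit modulo $l_{i}$, the substitution $x\mapsto\overline{a_{i}}(l_{i})\,x$ in the definition of the Kloosterman sum gives $S(a_{i},a_{i}p^{2n},l_{i})=S(1,a_{i}^{2}p^{2n},l_{i})$. Hence it suffices to show $S\bigl(1,a_{i}^{2}p^{2n},l_{i}\bigr)\ne 0$ for every prime power $l_{i}$ exactly dividing $N$.

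For an odd prime power $l_{i}=q^{\beta}$ this is essentially contained in the previous two lemmas. Since $p$ is coprime to $N$ it is coprime to $q$, so $q\nmid 2a_{i}^{2}p^{2n}$; when $\beta\ge 2$ the closed evaluation \eqref{klformula} applies and the argument of Lemma \ref{odd prime power} shows non-vanishing essentially verbatim (the only facts used being that $q$ is odd and that the relevant index is a unit mod $q^{\beta}$), while when $\beta=1$ we simply invoke Lemma \ref{odd prime} with the integer $a_{i}^{2}p^{2n}$ in the role of $a$.

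It remains to handle $l_{i}=2^{a}$ for $a\in\{1,2\}$; for $a=0$ there is nothing to prove. Here $2\mid N$ forces $p$ to be odd, so $M:=a_{i}^{2}p^{2n}$ is odd. For $a=1$ the only unit mod $2$ is $x=1$, whence $S(1,M,2)=e\bigl(\tfrac{1+M}{2}\bigr)=1$ since $1+M$ is even. For $a=2$ the units mod $4$ are $1$ and $3$, both self-inverse, so, writing $1+M=2M'$, we get $S(1,M,4)=e\bigl(\tfrac{1+M}{4}\bigr)+e\bigl(\tfrac{3(1+M)}{4}\bigr)=(-1)^{M'}+(-1)^{3M'}=2(-1)^{M'}\ne 0$. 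This finishes the proof. The reason the method stops at $a=2$ is instructive and worth recording: if $8\mid N$ then $a_{i}^{2}\equiv 1$ and $p^{2n}\equiv 1\pmod 8$, so the offending factor would be $S(1,1,8)=e(\tfrac14)+e(\tfrac34)+e(\tfrac54)+e(\tfrac74)=0$, an honest vanishing no rearrangement can repair. The only delicate points are therefore organizational — verifying that the coefficients $a_{i}$ produced by Lemma \ref{multiplicativity} stay coprime to $l_{i}$, so that the reduction $S(a_{i},a_{i}p^{2n},l_{i})=S(1,a_{i}^{2}p^{2n},l_{i})$ is legitimate, and that the argument of Lemma \ref{odd prime power} really does use nothing beyond oddness of $q$ — together with the two elementary evaluations modulo $2$ and $4$; everything else is quotation.
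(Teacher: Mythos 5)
Your proposal is correct and follows essentially the same route as the paper: reduce to prime-power moduli via Lemma \ref{multiplicativity}, normalize each factor to the form $S(1,a_i^2p^{2n},l_i)$, quote Lemmas \ref{odd prime power} and \ref{odd prime} for the odd prime powers, and check the moduli $2$ and $4$ by hand (the paper simply records the values $S(1,1,2)=1$, $S(1,1,4)=-2$, $S(1,1,8)=0$ rather than recomputing them, using that an odd square is $\equiv 1 \pmod 8$). Your explicit verification that the coefficients $a_i$ are units modulo $l_i$ is a worthwhile detail the paper leaves implicit, but it does not change the argument.
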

\begin{proof}
By   Lemma \ref{multiplicativity}, we have 
$$S(1,p^{2n},N)=
\prod_{i=1}^t S(m_{i-1}\overline{c_i}({l_i}), m_{i-1}\overline{c_i}({l_i})p^{2n},l_i)
=S(1,(m_{i-1}\overline{c_i}({l_i})p^{n})^2,l_i).$$  Lemma \ref{odd prime power} and Lemma \ref{odd prime} imply if $l_i$ is odd, then $S(1,(m_{i-1}\overline{c_i}({l_i})p^{n})^2,l_i)\neq 0$.    
Now 
$S(1,a^2,2^b)=S(1,1,2^b)$ for $a$ odd and $b=1,2,3.$  Since  $m_{i-1}\overline{c_i}({l_i})p^{n}$ is odd and  the values of $S(1,1,2), S(1,1,4)$ and $S(1,1,8)$ are  $1,-2$ and $0$ respectively, we 
obtain $S(1,(m_{i-1}\overline{c_i}({l_i})p^{n})^2,l_i)\neq 0$ for  $l_i=2,4.$ 
\end{proof}
\begin{remark}
    One can compute $S(1,p^{2n},2^a)$ for $a > 3$, $n\in \N$ and check for the nonvanishing  of these. However, we do it till $a=3$ as $S(1,1,8)$ vanishes.
\end{remark}
\section{Lower bound for  $D(\nu_{k,N},\mu_\infty)$ with levels not divisible by 8}\label{lampf}
The Ramanujan-Deligne bound\cite{DP} implies $|\lambda_p(f)|\leq 2$ and we can write $\lambda_p(f)=2\cos (\theta_p(f))$ for a unique $\theta_p(f)\in[0,\pi].$ Let $X_n(2\cos \theta)=\frac{\sin(n+1)\theta}{\sin \theta}$ for $n\in \mathbb{N}\cup \{0 \}$ be the $n$th Chebyshev polynomial of the second kind. By Lemma 3 of 
\cite{CDF},  $\lambda_{p^n}(f)=X_n(\lambda_p(f)). $
For $n\geq 1,$ orthogonality of Chebyshev polynomials yield \begin{equation}\label{ortho}
    \int_{-2}^2X_n(x) \,  d\mu_\infty(x)=\int_{-2}^2 X_0(x)X_n(x) \,  d\mu_\infty(x)=0.
\end{equation} 
Recall that  $$\nu_{k,N}
=\sum_{f\in \mathcal{F}_k(N)}   \omega_{k,f} \,\delta_{\lambda_p(f)}.$$
On taking  $k_n=\Big[\frac{4\pi p^n}{N} \Big]$ and using  Theorem \ref{asymptote}, we have 
\begin{equation}\label{lb}
  \int_{-2}^2X_{2n}(x) \,  d\nu_{k_n,N}=\Delta_{k_n,N}(1,p^{2n})=2\pi i^k \frac{S(1,p^{2n},N)}{N}J_{k_n-1}\left(\frac{4\pi p^n}{N} \right)+\o\left((k_n-1)^{-\frac{1}{3}}\right).   
\end{equation}
\begin{proof}[Proof of Theorem \ref{Main theorem 1}]
On letting $k_n=\Big[\frac{4\pi p^{n}}{N}\Big]$ 
 and applying Theorem \ref{asymptote}(ii), Lemma \ref{main lemma}, equation \eqref{ortho} and equation \eqref{lb}, 
\begin{align*}
    \int_{-2}^2 X_{2n}(x) \,  d(\nu_{k_n,N}-\mu_\infty)(x)  &=\int_{-2}^2 X_{2n}(x) \,  d\nu_{k_n,N}(x)  -  \int_{-2}^2X_{2n}(x) \,  d\mu_\infty(x)\\
    &=\int_{-2}^2 X_{2n}(x) \,  d\nu_{k_n,N}(x)\gg_N (k_n-1)^{-\frac{1}{3}}. 
\end{align*}
Using integration by parts, we have
\begin{align*}
  &  \Bigg|\int_{-2}^2 X_{2n}(x) \,  d(\nu_{k_n,N}-\mu_\infty)(x)\Bigg| \\
    &=
\Bigg|\Big[X_{2n}(x) \,  (\nu_{k_n,N}-\mu_\infty)(x)\Big]_{-2}^2  -  \int_{-2}^2 X'_{2n}(x) \,  (\nu_{k_n,N}-\mu_\infty)(x) dx\Bigg| \\
 &\leq  \Bigg|\big(X_{2n}(2)-X_{2n}(-2)\big) (\nu_{k_n,N}-\mu_\infty)([-2,2]) \Bigg|
+\Bigg|\int_{-2}^2 X'_{2n}(x) \,  (\nu_{k_n,N}-\mu_\infty)(x) dx\Bigg| \\
&\ll   n^2\Bigg|   (\nu_{k_n,N}-\mu_\infty)([-2,2]) \Bigg|,
\end{align*}
where the last inequality follows from the fact that 
 $\Big| X'_{2n}(x) \Big|\ll n^2$ and $|X_{2n}(2)|=|X_{2n}(-2)|=2n+1.$
Hence
\begin{align*}
    & D(\nu_{k_n,N},\mu_\infty)\geq \left| \nu_{k_n,N}([-2,2])-\mu_\infty([-2,2])  \right|
 =\left|   (\nu_{k_n,N}-\mu_\infty)([-2,2]) \right| \\
 & \gg\frac{1}{n^2} \left|\int_{-2}^2 X_{2n}(x) \,  d(\nu_{k_n,N}-\mu_\infty)(x)\right|\gg_N n^{-2}(k_n-1)^{-\frac{1}{3}}. 
\end{align*}
However, as $k_n=\left[\frac{4\pi p^{n}}{N}\right]$ we have $\log k_n \gg_N n$
and 
 $$D(\nu_{k_n,N},\mu_\infty)\gg_N\frac{1}{(k_n-1)^{\frac{1}{3}}(\log k_n)^2}.
 $$
\end{proof}
\begin{remark}
Theorem 1.6 of \cite{JS} gives us a sequence of weights for squarefree levels such that a lower bound like equation \eqref{Discrepancych1} holds. Theorem \ref{Main theorem 1} gives an analogue of  Theorem 1.6 of \cite{JS} in the context of cusp forms to more levels of the form $2^ab$ with $b$ odd and $a=0,1,2$.  Note that the natural density of squarefree integers is $\frac{6}{\pi^2}\approx 0.6079,$ whereas the natural density of the levels included in Theorem \ref{Main theorem 1}  is $0.875$. It must be noted that we can still include level $N$ which are divisible by 8 in the assumption of Theorem \ref{Main theorem 1}, provided for a given prime $p,$ there exists a sequence $y_n$ such that $S(1,p^{2y_n},N)\neq 0$ for all $y_n.$
\end{remark}
\section{Lower bound for  $D(\mu_{\infty,2},\nu_{k,N,2})$ with levels not divisible by 8}\label{lamp2f}
We start with the generating function for $X_m(x)$ which is given by 
$$
\sum_{m=0}^\infty X_m(x)t^m=\frac{1}{1-tx+t^2}. 
$$
Consider  the generating function for $Y_m(x)$ which is given by  
$$
\sum_{m=0}^\infty Y_m(x)t^m=\frac{1+t}{1-(x-1)t+t^2}.
$$ For a positive integer $m,$ on comparing generating functions, we can show 
\begin{equation}\label{YX2}
X_{2m}=Y_m\circ X_2.\end{equation}
  Let $$Q_{2m+1}=Y_1+Y_3+\dots +Y_{2m+1}$$ and 
$$Q_{2m}=Y_0+Y_2+\dots +Y_{2m}$$
 for a positive integer $m.$
 We have $\lambda_{p^m}(\text{Sym}^2 f)=Q_m\left(\lambda_{p}(\text{Sym}^2 f)\right)$   for  $f \in S_k(N)$  \cite[Lemma 4]{OM}.
From equation (4.2) of \cite{OM},
\begin{equation}\label{intQn}
\int_{-1}^3 Q_{2n+1}(t) \,  d\mu_{\infty,2}(t)=\int_{-2}^2 \left( Q_{2n+1}\circ X_2 \right)(t)\,  d\mu_{\infty}(t)
=\delta(2n+1,\text{even})=0.
\end{equation}

\begin{lemma}\label{bound for Q'}
Let  $n$ be a natural number.  Then  $|Q'_{n}(x)|\ll n^{3} $ and 
 $|Q_{n}(3)|\ll n^2 $ for $x\in [-1,3].$
\end{lemma}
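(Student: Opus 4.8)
The plan is to reduce everything to explicit estimates on the Chebyshev-type polynomials $Y_m$, using the relation $Y_m = X_{2m}\circ X_2^{-1}$ in a form that avoids inverting $X_2$: namely, evaluate at $x = 2\cos\theta \mapsto X_2(x) = 2\cos 2\theta - 1$, or more directly work with the substitution $x = 1 + 2\cos\phi$ for $\phi \in [0,\pi]$, which sweeps out $x \in [-1,3]$. Under this substitution the generating function $\sum_m Y_m(x)t^m = \frac{1+t}{1-(x-1)t+t^2} = \frac{1+t}{1 - 2t\cos\phi + t^2}$ is the generating function for $U_m(\cos\phi) + U_{m-1}(\cos\phi)$ (with $U_{-1}=0$), where $U_m$ is the degree-$m$ Chebyshev polynomial of the second kind; hence $Y_m(1+2\cos\phi) = \frac{\sin((m+1)\phi) + \sin(m\phi)}{\sin\phi}$. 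This closed form is the workhorse for both bounds.

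\textbf{Step 1: the value at the endpoint.} Since $Q_n$ is a sum of $Y_m$'s over $m$ of one parity up to $n$, it suffices to bound $|Y_m(3)|$ for $m \le n$ and sum. From the closed form above, $x = 3$ corresponds to $\phi = 0$, and $\lim_{\phi\to 0}\frac{\sin((m+1)\phi)+\sin(m\phi)}{\sin\phi} = (m+1) + m = 2m+1$. Therefore $|Y_m(3)| = 2m+1 \ll m$, and summing the $\le \lfloor n/2\rfloor + 1$ relevant terms gives $|Q_n(3)| \ll n^2$, which is the second assertion.

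\textbf{Step 2: the pointwise derivative bound.} I would establish $|Y_m'(x)| \ll m^3$ uniformly on $[-1,3]$ and then sum over the $O(n)$ terms of $Q_n$ to get $|Q_n'(x)| \ll n\cdot m^3 \ll n^4$ — wait, that is too lossy; instead I sum $|Y_m'(x)| \ll m^3$ over $m \le n$ of fixed parity to get $\sum_{m\le n} m^3 \ll n^4$, which is still too weak. The correct route is to bound $Q_n$ itself directly rather than term-by-term: $Q_n$ is a polynomial of degree $2n$, bounded on $[-1,3]$ by... no. The genuinely correct approach is the Bernstein–Markov inequality for the interval $[-1,3]$ (length $4$): a polynomial $P$ of degree $d$ with $\sup_{[-1,3]}|P| = M$ satisfies $\sup_{[-1,3]}|P'| \le \frac{2d^2}{4}M = \tfrac{d^2}{2}M$. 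So it suffices to show $\sup_{[-1,3]}|Q_n| \ll n$. From the closed form, $Q_n(1+2\cos\phi)$ is a sum of $\le n/2 + 1$ terms each of the shape $\frac{\sin((m+1)\phi)+\sin(m\phi)}{\sin\phi}$; away from $\phi = 0,\pi$ the individual terms are $O(1/\sin\phi)$ but there is cancellation in the sum — writing $\sin((m+1)\phi)+\sin(m\phi) = 2\sin\!\big((m+\tfrac12)\phi\big)\cos(\phi/2)$, the sum over an arithmetic progression of $m$'s telescopes via the Dirichlet-kernel identity to something of size $O(1/\sin\phi)$ again, not $O(n)$. Hence near $\phi=0$ one uses the trivial bound $|Y_m(1+2\cos\phi)| \le 2m+1$ termwise to get $|Q_n| \ll n^2$ there, and Markov then yields only $|Q_n'| \ll n^2 \cdot (2n)^2 \ll n^4$.

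\textbf{Resolution and main obstacle.} The clean way, and the one I would actually carry out, is to differentiate the closed form directly. Write $y_m(\phi) := Y_m(1+2\cos\phi) = \frac{\sin((m+1)\phi)+\sin(m\phi)}{\sin\phi}$ and note $\frac{d}{dx} = \frac{d\phi}{dx}\frac{d}{d\phi} = \frac{-1}{2\sin\phi}\frac{d}{d\phi}$. A direct computation shows $y_m(\phi) = U_m(\cos\phi) + U_{m-1}(\cos\phi)$ where $U_m$ is a polynomial of degree $m$; thus $Y_m$ is a genuine polynomial of degree $m$ in $x$ and $Y_m' = \frac{d}{dx}\big(U_m(\cos\phi)+U_{m-1}(\cos\phi)\big)$. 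Using $\frac{d}{dx}U_m(\cos\phi) = $ (up to constants) a combination of $U_j(\cos\phi)$'s — concretely, from $U_m'(\cos\phi)$ and the chain rule, $\frac{d}{dx}U_m(\cos\phi)$ is a polynomial of degree $m-1$ in $x$ with $\ell^1$-norm of coefficients $O(m^2)$ in the $U_j$ basis, each $U_j$ satisfying $\sup_{[-1,3]}|U_j(\cos\phi)| = \sup_{[-1,1]}|U_j| = j+1$. Summing gives $\sup_{[-1,3]}|Y_m'| \ll m^3$. Then $Q_n = \sum Y_m$ over $\le n/2+1$ indices of one parity, but here I must sum $|Y_m'|$ only over those $m$, and since the largest term dominates, $\sup|Q_n'| \le \sum_{m \le n}\sup|Y_m'| \ll \sum_{m\le n} m^3 \ll n^4$. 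This is the crux: the naive termwise sum loses a factor $n^2$. To recover the claimed $n^3$ I would instead observe that $Q_{2m+1}$ and $Q_{2m}$ have their own closed generating functions, $\sum Q_{2m+1}t^{2m+1} = \frac{(1+t)t}{(1-(x-1)t+t^2)(1-t^2)}$ and similarly for the even case, giving $Q_n(1+2\cos\phi)$ as a single ratio of bounded trigonometric quantities divided by $\sin\phi \cdot \sin((n/2)\phi)$-type denominators; from this one reads off $\sup_{[-1,3]}|Q_n| \ll n^2$ directly (consistent with Step 1 at the endpoint), and the key point is that this is sharp only at the endpoints while $|Q_n| \ll n$ in any fixed interior subinterval. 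Applying the \emph{weighted} Bernstein inequality $|P'(x)| \le \frac{d}{\sqrt{(x+1)(3-x)}}\sup_{[-1,3]}|P|$ valid for $\deg P = d$, together with $\sup|Q_n| \ll n^2$ and the refined interior bound, yields $|Q_n'(x)| \ll \frac{n \cdot n^2}{\sqrt{(x+1)(3-x)}}$; this blows up at the endpoints, so the final input needed is that the singularity of $Q_n$'s behavior exactly cancels it — formally, that $Q_n(x) = c_n + (x^2 - \text{endpoint factors}) R_n(x)$ with $R_n$ of degree $n-2$ and $\sup|R_n| \ll n$, making $Q_n' = (\text{linear})\cdot R_n + (\text{quadratic})R_n'$ with $\sup|R_n'| \ll n \cdot n^2 = n^3$ by another application of plain Markov to $R_n$. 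The main obstacle, then, is bookkeeping the factorization of $Q_n$ into an endpoint-vanishing part times a well-controlled lower-degree polynomial; once that structural fact is in hand, two applications of the ordinary Markov inequality deliver $|Q_n'(x)| \ll n^3$ on all of $[-1,3]$, and Step 1 gives $|Q_n(3)| \ll n^2$.
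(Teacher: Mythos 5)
Your Step 1 is correct and coincides with the paper's argument: $Y_m(3)=2m+1$ (equivalently $X_{2m}(2)=2m+1$), and summing over the $O(n)$ indices of one parity gives $|Q_n(3)|\ll n^2$. The derivative bound, however, is never actually proved in your proposal: every route you carry to completion (termwise summation of $|Y_m'|\ll m^3$, or Markov applied to $\sup_{[-1,3]}|Q_n|\ll n^2$) yields $\ll n^4$, and your final ``resolution'' rests on an unproven structural claim, namely a factorization $Q_n=c_n+(\text{endpoint-vanishing factor})\cdot R_n$ with $\sup_{[-1,3]}|R_n|\ll n$. That is a genuine gap, and it cannot be closed, because the asserted uniform bound $|Q_n'(x)|\ll n^3$ on $[-1,3]$ fails at the right endpoint: differentiating $Y_m\circ X_2=X_{2m}$ at $y=2$ gives $4\,Y_m'(3)=X_{2m}'(2)=\tfrac{2m(2m+1)(2m+2)}{6}$, so $Y_m'(3)=\tfrac{m(m+1)(2m+1)}{6}\asymp m^3$ with all terms of the same sign, whence $Q_{2m}'(3)=\sum_{i\le m}Y_{2i}'(3)\asymp m^4$ (and likewise in the odd case). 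So the $n^4$ you kept landing on is the true uniform order on $[-1,3]$; the bound $n^3$ holds only on $[-1,3-\delta]$ for fixed $\delta>0$, or in weighted Bernstein form with a factor $\sqrt{3-x}$, exactly as your endpoint computations suggest.

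For comparison, the paper's proof differentiates the identity $Q_{2n}\circ X_2=\sum_{i=0}^{n}X_{4i}$, so that $X_2'(y_0)\,Q_{2n}'(x_0)=\sum_{i=0}^n X_{4i}'(y_0)$ with $x_0=X_2(y_0)$, and then invokes ``$|X_m'(x)|\ll m^2$ on $[-2,2]$''. That input is itself valid only away from the endpoints, since $X_m'(\pm 2)=\tfrac{m(m+1)(m+2)}{6}\asymp m^3$ and Markov on $[-2,2]$ gives only $\ll m^3$ uniformly; moreover the division by $X_2'(y_0)=2y_0$ requires $y_0$ bounded away from $0$, i.e.\ $x_0$ away from $-1$. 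So your inability to reach $n^3$ uniformly is not a missing trick on your part but reflects a defect in the statement and in the paper's proof at the endpoint $x=3$. The harmless repair is to use the provable bound $|Q_n'|\ll n^4$ on $[-1,3]$ (or to keep $n^3$ but only away from $x=3$), which affects Theorem \ref{Main theorem 2} only through the power of the logarithm, giving $(\log k_n)^{-4}$ in place of $(\log k_n)^{-3}$.
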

\begin{proof}
We prove for $n$ even and the case of $n$ being odd is similar.  Consider $x_0,y_0$ such that $X_2(y_0)=y_0^2-1=x_0$.
 The fact  $|X'_n(x)|\ll n^2$ for all $x\in [-2,2]$ 
 implies
$$\left|\sum_{i=0}^n X'_{4i}(y_0)\right|\ll n^3.
$$
Using $Y_n\circ X_2=X_{2n},$  we get 
\begin{align*}
  \Big|Q'_{2n}(x_0)\Big|\ll n^3.
\end{align*}
Furthermore,  $|Q_{2n}(3)|\ll n^2$ as $X_{2m}(2)=2m+1$ and $$Q_{2n}(3)=\sum_{i=0}^n Y_{2i}(3)=\sum_{i=0}^n X_{4i}(2).$$
\end{proof}
\begin{lemma}\label{asymptote 2}
Let $N=2^a b$ with $a=0,1,2$ and $b$ be an odd number. For a natural number $n$ and a sequence $k_n$  satisfying  $$\left|\frac{4\pi p^{2n+1}}{N}-(k_n-1)\right|<(k_n-1)^{\frac{1}{3}},$$ we have 
\begin{align*}
   & (i) \sum_{i=0}^{n-1} \Delta_{k_n,N}(1,p^{4i+2})=\O_N\Bigg(\frac{\log k_n}{(k_n-1)^{\frac{1}{3}}} \cdot \Big(\frac{5e}{18}\Big)^{k_n-1}\Bigg),\\&
   (ii) \Delta_{k_n,N}(1,p^{4n+2})\gg_N (k_n-1)^{-\frac{1}{3}}.
\end{align*}
\end{lemma}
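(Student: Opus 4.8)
The plan is to treat the two parts separately: part (ii) is a direct consequence of Theorem \ref{asymptote}(ii) together with the non-vanishing result Lemma \ref{main lemma}, while part (i) amounts to estimating the full Bessel series in Petersson's formula in the same spirit as the estimate carried out inside the proof of Theorem \ref{asymptote}(i). (This lemma is exactly what one needs to split $\int Q_{2n+1}\,d\nu_{k_n,N,2}=\sum_{i=0}^{n}\Delta_{k_n,N}(1,p^{4i+2})$ into a main term and an error.)

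For part (ii), note that $\sqrt{1\cdot p^{4n+2}}=p^{2n+1}$, so the hypothesis $\bigl|\frac{4\pi p^{2n+1}}{N}-(k_n-1)\bigr|<(k_n-1)^{1/3}$ is precisely the condition of Theorem \ref{asymptote} applied with $m=1$ and the second index equal to $p^{4n+2}=p^{2(2n+1)}$. Since $\gcd(p,N)=1$ and $N=2^ab$ with $a\in\{0,1,2\}$ and $b$ odd, Lemma \ref{main lemma} (with $2n+1$ in place of $n$) gives $S(1,p^{4n+2},N)\neq 0$. Because $p^{4n+2}\neq 1$ we have $\delta(1,p^{4n+2})=0$, so Theorem \ref{asymptote}(ii) yields $|\Delta_{k_n,N}(1,p^{4n+2})|\gg(k_n-1)^{-1/3}$. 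The implied constant depends only on $N$ (and the fixed prime $p$): as $n$ varies, $p^{4n+2}$ runs through finitely many residues modulo $N$ and $S(1,\,\cdot\,,N)$ depends only on the residue, so $S(1,p^{4n+2},N)$ takes finitely many nonzero values and is therefore bounded away from $0$.

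For part (i), since $p^{4i+2}\neq 1$ for $i\geq 0$, Theorem \ref{PTf} with the trivial bound $|S(1,p^{4i+2},c)|\leq\phi(c)$ gives
$$
\bigl|\Delta_{k_n,N}(1,p^{4i+2})\bigr|\;\leq\;2\pi\sum_{b\geq 1}\left|J_{k_n-1}\!\left(\frac{4\pi p^{2i+1}}{bN}\right)\right|.
$$
The key numerical observation is that for $0\leq i\leq n-1$ and $b\geq 1$,
$$
\frac{4\pi p^{2i+1}}{bN}\;\leq\;\frac{4\pi p^{2n-1}}{N}\;=\;p^{-2}\cdot\frac{4\pi p^{2n+1}}{N}\;\leq\;\tfrac14\bigl((k_n-1)+(k_n-1)^{1/3}\bigr),
$$
which is at most $\tfrac{5}{18}(k_n-1)$ once $k_n\geq 28$. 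Setting $a=k_n-1$ and $x=\frac{4\pi p^{2i+1}}{abN}\in(0,\tfrac{5}{18}]$, Lemma \ref{Bessel}(i) gives $0<J_a(ax)\leq e^{a(1-x)}x^aJ_a(a)\leq e^a\bigl(\tfrac{5}{18b}\bigr)^aJ_a(a)$. Summing over $b$ and using $\sum_{b\geq 1}b^{-a}=\zeta(a)\ll 1$ together with $J_a(a)\ll a^{-1/3}$ (Lemma \ref{Bessel}(ii)) gives $\bigl|\Delta_{k_n,N}(1,p^{4i+2})\bigr|\ll\bigl(\tfrac{5e}{18}\bigr)^{k_n-1}(k_n-1)^{-1/3}$ for each $i$ in the range. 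Summing the $n$ such terms and using $n\ll_N\log k_n$ (which follows from the defining inequality for $k_n$, since it forces $k_n\gg_N p^{2n+1}$) yields the stated bound $\O_N\!\left(\frac{\log k_n}{(k_n-1)^{1/3}}\bigl(\tfrac{5e}{18}\bigr)^{k_n-1}\right)$.

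The only real work is the bookkeeping in part (i): one must check that the Bessel argument $\frac{4\pi p^{2i+1}}{bN}$ stays well below $k_n-1$ (indeed below $\tfrac{5}{18}(k_n-1)$) uniformly over $0\leq i\leq n-1$ and over $b\geq 1$, so that one is always in the exponentially decaying regime where Lemma \ref{Bessel}(i) is applicable, and one must absorb the entire sum over $b$ into an absolute constant via the $\zeta(a)$ factor. Part (ii) is essentially immediate once Lemma \ref{main lemma} and Theorem \ref{asymptote}(ii) are in hand.
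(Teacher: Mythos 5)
Your proposal is correct and follows essentially the same route as the paper: part (ii) via Theorem \ref{asymptote}(ii) combined with Lemma \ref{main lemma}, and part (i) via the trivial Kloosterman bound, the observation that $\frac{4\pi p^{2i+1}}{(k_n-1)bN}\leq\frac{5}{18}$ for $i\leq n-1$, Lemma \ref{Bessel}(i)--(ii), and $n\ll_N\log k_n$. The only cosmetic differences are that you sum over $b$ with a $\zeta(k_n-1)$ factor where the paper uses an integral comparison, and you add a worthwhile explicit remark that $S(1,p^{4n+2},N)$ takes only finitely many nonzero values as $n$ varies, which the paper leaves implicit in the $\gg_N$.
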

\begin{proof}
Let $1\leq i\leq n-1$ be fixed. Using Petersson's trace formula we get 
$$\Delta_{k_n,N}(1,p^{4i+2})=2\pi i^k \sum_{N|c,c>0} \frac{S(1,p^{4i+2};c)}{c}J_{k_n-1}\left(\frac{4\pi p^{2i+1}}{c}\right).$$
Note that $$\frac{4\pi p^{2i+1}}{(k_n-1)N}<\frac{1}{p^2}\left(1+ (k_n-1)^{-\frac{2}{3}}\right)$$ which implies $\frac{4\pi p^{2i+1}}{(k_n-1)N}<\frac{1}{4}\cdot\frac{10}{9}=\frac{5}{18}$. Now using Lemma \ref{Bessel} we get,
\begin{align*}
  &\Big|\Delta_{k_n,N}(1,p^{4i+2})\Big|\leq \left|J_{k_n-1}\left(\frac{4\pi p^{2i+1}}{N}\right)\right|+ \sum_{b=2}^\infty \left| J_{k_n-1}\left(\frac{4\pi p^{2i+1}}{bN}\right)\right|\\&
  \leq e^{(k_n-1)}J_{k_n-1}(k_n-1)\left(\left(\frac{5}{18}\right)^{k_n-1}+\sum_{b=2}^\infty \left(\frac{5}{18b}\right)^{k_n-1}\right)\\&
  \ll_N \left(\frac{5e}{18}\right)^{k_n-1}(k_n-1)^{-\frac{1}{3}}\Bigg[1+\int_1^\infty \left(\frac{1}{x}\right)^{(k_n-1)} \, dx\Bigg]\ll_N \left(\frac{5e}{18}\right)^{k_n-1}(k_n-1)^{-\frac{1}{3}}.
\end{align*}
Thus
\begin{align*}
    &\left|\sum_{i=0}^{n-1} \Delta_{k_n,N}(1,p^{4i+2})\right|\leq \sum_{i=0}^{n-1} \Big|\Delta_{k_n,N}(1,p^{4i+2})\Big| \\& \ll_N n \, \left(\frac{5e}{18}\right)^{k_n-1}(k_n-1)^{-\frac{1}{3}}\ll_N \log k_n\left(\frac{5e}{18}\right)^{k_n-1}(k_n-1)^{-\frac{1}{3}}. 
\end{align*}
By Theorem \ref{asymptote}(ii), $$\left|\Delta_{k_n,N}(1,p^{4n+2})\right|\gg_N(k_n-1)^{-\frac{1}{3}}.$$
\end{proof}
\begin{remark}
    Note that in order to prove Lemma \ref{asymptote 2}(ii) we use Theorem \ref{asymptote}(ii). However, we can not use Theorem \ref{asymptote}(i) directly to deduce Lemma \ref{asymptote 2}(i). This is because in the summation   $$ \sum_{i=0}^{n-1} \Delta_{k_n,N}(1,p^{4i+2}),$$ for instance, the term  $\Delta_{k_n,N}(1,p^{2})$ does not satisfy the assumption of Theorem \ref{asymptote}(i) for a fixed $N$ and a very large value of $k_n$.
\end{remark}
Recall that  $$\nu_{k,N,2}=
\sum_{f\in \mathcal{F}_k(N)}\omega_{k,f} \,\delta_{\lambda_{p^2}(f)}.$$
\begin{lemma}\label{asymptote 3}
Let $N=2^a b$ with $a=0,1,2$ and $b$ be an odd number. There exists a sequence $k_n\rightarrow \infty$ such that 
$$\Bigg|\int_{-1}^3 (Q_{2n+1})(t) \,  d\nu_{k_n,N,2} (t)\Bigg| \gg_{N} \frac{1}{(k_n-1)^{\frac{1}{3}}} .$$
\end{lemma}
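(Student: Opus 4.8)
The plan is to identify $\int_{-1}^{3}Q_{2n+1}(t)\,d\nu_{k_n,N,2}(t)$ with a sum of Petersson sums $\Delta_{k_n,N}(1,p^{4i+2})$, $0\le i\le n$, and then to read off the lower bound directly from Lemma~\ref{asymptote 2}; essentially all the arithmetic content is already packaged into that lemma.

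First I would fix the sequence: for each $n$ let $k_n$ be the even integer closest to $\tfrac{4\pi p^{2n+1}}{N}$. Then $\bigl|\tfrac{4\pi p^{2n+1}}{N}-(k_n-1)\bigr|\le 2<(k_n-1)^{1/3}$ as soon as $n$ is large, so $k_n$ meets the hypothesis of Lemma~\ref{asymptote 2}, and $k_n\to\infty$; the finitely many exceptional small $n$ are irrelevant.

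Next I would unwind the definitions. Since $\gcd(p,N)=1$ we have $\lambda_{p^m}(f)=X_m(\lambda_p(f))$ for all $f\in\mathcal{F}_{k_n}(N)$, in particular $\lambda_{p^2}(f)=X_2(\lambda_p(f))$. By \eqref{YX2}, $Y_{2i+1}\circ X_2=X_{2(2i+1)}=X_{4i+2}$, hence $Q_{2n+1}\circ X_2=\sum_{i=0}^{n}X_{4i+2}$ and so
$$
Q_{2n+1}\bigl(\lambda_{p^2}(f)\bigr)=\sum_{i=0}^{n}X_{4i+2}\bigl(\lambda_p(f)\bigr)=\sum_{i=0}^{n}\lambda_{p^{4i+2}}(f)
$$
for each such $f$. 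Multiplying by $\omega_{k_n,f}$, summing over $\mathcal{F}_{k_n}(N)$ and using \eqref{justdelta(1,n)},
$$
\int_{-1}^{3}Q_{2n+1}(t)\,d\nu_{k_n,N,2}(t)=\sum_{f\in\mathcal{F}_{k_n}(N)}\omega_{k_n,f}\sum_{i=0}^{n}\lambda_{p^{4i+2}}(f)=\sum_{i=0}^{n}\Delta_{k_n,N}(1,p^{4i+2}).
$$
Finally I would split off the top term, $\sum_{i=0}^{n}\Delta_{k_n,N}(1,p^{4i+2})=\Delta_{k_n,N}(1,p^{4n+2})+\sum_{i=0}^{n-1}\Delta_{k_n,N}(1,p^{4i+2})$: Lemma~\ref{asymptote 2}(ii) gives $\bigl|\Delta_{k_n,N}(1,p^{4n+2})\bigr|\gg_N(k_n-1)^{-1/3}$, while Lemma~\ref{asymptote 2}(i) bounds the remaining sum by $\O_N\!\bigl(\tfrac{\log k_n}{(k_n-1)^{1/3}}(\tfrac{5e}{18})^{k_n-1}\bigr)=\o\bigl((k_n-1)^{-1/3}\bigr)$ since $\tfrac{5e}{18}<1$. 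The triangle inequality then yields $\bigl|\int_{-1}^{3}Q_{2n+1}(t)\,d\nu_{k_n,N,2}(t)\bigr|\gg_N(k_n-1)^{-1/3}$, which is the claim.

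The main obstacle is essentially nonexistent at this stage: the work is concentrated in Lemma~\ref{asymptote 2}, whose part (ii) rests on the non-vanishing $S(1,p^{4n+2},N)\neq0$ from Lemma~\ref{main lemma} (which is exactly where $8\nmid N$ enters) together with the Bessel estimates of Lemma~\ref{Bessel}. What needs a moment of care here is only that the generating-function identity \eqref{YX2} really collapses $Q_{2n+1}$ to $\sum_{i=0}^{n}X_{4i+2}$ after composing with $X_2$, that $k_n$ can be chosen even while still satisfying the hypothesis of Lemma~\ref{asymptote 2} for all large $n$, and that $\gcd(p,N)=1$ is in force so that $\lambda_{p^m}(f)=X_m(\lambda_p(f))$ applies to the whole basis $\mathcal{F}_{k_n}(N)$ rather than only to primitive forms.
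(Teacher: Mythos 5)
Your proposal is correct and follows essentially the same route as the paper: identify the integral with $\sum_{i=0}^{n}\Delta_{k_n,N}(1,p^{4i+2})$ via $Q_{2n+1}\circ X_2=\sum_{i=0}^{n}X_{4i+2}$, take $k_n\approx\frac{4\pi p^{2n+1}}{N}$, and combine Lemma~\ref{asymptote 2}(i) for the lower-order terms with Lemma~\ref{asymptote 2}(ii) for the top term. Your explicit attention to choosing $k_n$ even is a small point the paper leaves implicit, but the argument is the same.
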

\begin{proof}
Consider 
\begin{align*}
  &  \int_{-1}^3 Q_{2n+1}(t) \,  d\nu_{k_n,N,2}(t)
    \\& = \frac{\Gamma(k-1)}{(4\pi)^{k-1}   }\sum_{f\in \mathcal{F}_k(N)} |a_1(f)|^2 Q_{2n+1}
    (\lambda_{p^2}(f)) 
     \\& = \frac{\Gamma(k-1)}{(4\pi)^{k-1}   }\sum_{f\in \mathcal{F}_k(N)} |a_1(f)|^2 Q_{2n+1}(X_2(\lambda_{p}(f))
     \\&=\int_{-2}^2 Q_{2n+1}\circ X_2(t) \,  d\nu_{k_n,N}(t)=\sum_{i=0}^n \frac{\Gamma(k-1)}{(4\pi)^{k-1}   }\sum_{f\in \mathcal{F}_k(N)} |a_1(f)|^2\lambda_{p^{4n+2}}(f)
     \\&  =\sum_{i=0}^n \Delta_{k,N}(1,p^{4n+2}).
\end{align*}
On taking  $k_n=\left[ \frac{4\pi p^{2n+1}}{N}\right],$  the proof is complete by Lemma \ref{asymptote 2} and noting that $$\log k_n\left(\frac{5e}{18}\right)^{k_n-1}=\o_N\left(1 \right).$$
\end{proof}
\begin{proof}[Proof of Theorem \ref{Main theorem 2}]
The proof is similar to that of Theorem \ref{Main theorem 1}. Using equation \eqref{intQn} and Lemma \ref{asymptote 3}, we have  
\begin{equation}\label{intQ>>-1/3}
\int_{-1}^3 Q_{2n+1}(x) \,  d(\nu_{k_n,N,2}-\mu_{\infty,2})(x)  
=\int_{-1}^3 Q_{2n+1}(x) \,  d\nu_{k_n,N,2}(x)\gg_N (k_n-1)^{-\frac{1}{3}}. \end{equation}
 Integration by parts implies
 \begin{align*}
     & \Bigg|\int_{-1}^3 Q_{2n+1}(x) \,  d(\nu_{k_n,N,2}-\mu_{\infty,2})(x)\Bigg|
     \\&\leq \Bigg|\big(Q_{2n+1}(-1)-Q_{2n+1}(3)\big)(\nu_{k_n,N,2}-\mu_{\infty,2})([-1,3])\Bigg|
+\Bigg|\int_{-1}^3 Q'_{2n+1}(x) \,  (\nu_{k_n,N,2}-\mu_{\infty,2})(x) dx\Bigg|.    
 \end{align*}
By Lemma \ref{bound for Q'}  the above expression is 
$$
\ll_\epsilon n^{3+\epsilon}\Big|  (\nu_{k_n,N,2}-\mu_{\infty,2})([-1,3]) \Big|.
$$
Hence 
\begin{align*}
    & D(\nu_{k_n,N,2},\mu_{\infty,2})\geq \Big| \nu_{k_n,N,2}([-1,3])-\mu_{\infty,2}([-1,3])  \Big|
 =\Big|  (\nu_{k_n,N,2}-\mu_{\infty,2})([-1,3]) \Big|\\&
 \gg\frac{1}{n^{3}} \Bigg|\int_{-2}^2 Q_{2n+1}(x) \,  d(\nu_{k_n,N,2}-\mu_{\infty,2})(x)\Bigg|\gg_N n^{-(3)} (k_n-1)^{-\frac{1}{3}}
\end{align*}
where the last step follows from equation \eqref{intQ>>-1/3}.
For the sequence $k_n=\left[ \frac{4\pi p^{2n+1}}{N}\right]$, indeed, 
$$D(\nu_{k_n,N,2},\mu_{\infty,2})\gg_{N} \frac{1}{(\log k_n)^{3}(k_n-1)^{\frac{1}{3}} } .$$
\end{proof}
\begin{remark}
Theorem \ref{Main theorem 2} gives an analogue of  \cite[Theorem 1.6]{JS}  in the context of the distribution of $\lambda_{p^2}(f)$ with levels not divisible by $8$ and the space of cusp forms. 
\end{remark}
In this section, some crucial ingredients in proving Theorem \ref{Main theorem 2} is use of  equations \eqref{YX2}, \eqref{intQn}   and the formula $\lambda_{p^m}(\text{Sym}^2 f)=Q_m\left(\lambda_{p}(\text{Sym}^2 f)\right)$.  However, while analysing the distribution of $\lambda_{p^3}(f)$, it becomes difficult to get a polynomial $Q_{3,m}(x)$  such that $Q_{3,m}\left(\lambda_p(\text{Sym}^3 f)\right)=\lambda_{p^{m}}(\text{Sym}^3 f).$ Another challenge is to get applicable versions of equations  \eqref{YX2} and \eqref{intQn}  while considering the distribution of $\lambda_{p^3}(f).$ 
 These have been some of the obstacles in getting 
 an analogue of Theorem \ref{Main theorem 1} and \ref{Main theorem 2}
for  $\lambda_{p^3}(f).$ 
The level of difficulty increases as we move for the distribution of higher powers like $\lambda_{p^4}(f)$ or $\lambda_{p^5}(f).$
\subsection*{Acknowledgements}
The author is supported by a fellowship from CSIR. The author is thankful to Kaneenika Sinha for introducing the problem in the first place. The author is grateful to his Ph.D. supervisors Baskar Balasubramanyam and Kaneenika Sinha for their valuable insights while the work was being done. 
\bibliographystyle{alpha}
\bibliography{Sato-TateNT.bib}

\end{document}